\title{Lengths of Irreducible and Delicate Words}
\author{Benjamin Przybocki\thanks{Stanford University, {\tt benprz@stanford.edu}.}}
\date{August 2021}
\theoremstyle{definition}
\newtheorem{definition}{Definition}[section]
\theoremstyle{plain}
\newtheorem{theorem}[definition]{Theorem}
\newtheorem{question}[definition]{Question}
\newtheorem{lemma}[definition]{Lemma}
\theoremstyle{remark}
\newtheorem*{claim}{Claim}
\begin{document}

\maketitle

\begin{abstract}
    We study words that barely avoid repetitions, for several senses of ``barely''. A squarefree (respectively, overlap-free, cubefree) word is \emph{irreducible} if removing any one of its interior letters creates a square (respectively, overlap, cube). A squarefree (respectively, overlap-free, cubefree) word is \emph{delicate} if changing any one of its letters creates a square (respectively, overlap, cube). We classify the lengths of irreducible and delicate squarefree, overlap-free, and cubefree words over binary and ternary alphabets.
\end{abstract}

\section{Introduction}

In combinatorics on words, it's common to study repetitions in words. Three kinds of repetition are squares, overlaps, and cubes. A \emph{square} is a word of the form $XX$, where $X$ is a nonempty word; an example in English is ``hotshots''. An \emph{overlap} is a word of the form $xYxYx$, where $x$ is a letter and $Y$ is a possibly empty word; an example in English is ``alfalfa''. Finally, a \emph{cube} is a word of the form $XXX$, where $X$ is a nonempty word; an example in English is ``hahaha''.

A \emph{factor} of a word is a contiguous subword. For instance, every cube contains an overlap as a factor, and every overlap contains a square as a factor. We say that a word is \emph{squarefree} (respectively, \emph{overlap-free}, \emph{cubefree}) if none of its factors is a square (respectively, overlap, cube).

It's natural to ask over which alphabets there exist arbitrarily long squarefree, overlap-free, or cubefree words. It's easy to see that every binary word of length at least 4 contains a square. At the same time, over 100 years ago, Thue \cite{Thue1906, Thue1912} (see also \cite{Berstel1995}) proved that there are arbitrarily long overlap-free binary words and arbitrarily long squarefree ternary words. Thus, when studying squarefree words, we use a ternary alphabet, and when studying overlap-free and cubefree words, we use a binary alphabet, since these are the smallest alphabets for which the questions are interesting.

Recently, there has been interest in studying words that are only barely squarefree, overlap-free, or cubefree, for several senses of ``barely''. To this end, Grytczuk, Kordulewski, and Niewiadomski \cite{Grytczuk2020} introduced the notion of extremality. Namely, they defined an \emph{extremal} squarefree word to be a squarefree word such that inserting any letter from the alphabet into the word (possibly at the beginning or end) creates a square. The definition analogously extends to overlap-free and cubefree words. The same authors proved that there are infinitely many extremal squarefree ternary words. Mol and Rampersad \cite{MolRampersad} refined this result by determining for which lengths extremal squarefree ternary words exist; in particular, such words exist for all lengths at least 87. In the same vein, Mol, Rampersad, and Shallit \cite{MolRampersadShallit} determined for which lengths extremal overlap-free binary words exist. While there are infinitely many such words, they do not exist for all sufficiently large lengths. It remains unknown whether there are extremal cubefree binary words.

Harju \cite{Harju2021} introduced the notion of irreducibility as a dual to extremality. First, we say a letter in a word is \emph{interior} if it is not the first or last letter of the word. Then an \emph{irreducible} squarefree word is a squarefree word of length at least 3 such that removing any one of its interior letters creates a square. The definition again analogously extends to overlap-free and cubefree words. Notice that requiring the letter one removes to be interior is essential, because any squarefree, overlap-free, or cubefree word will remain so after removing the first or last letter. Harju proved that there is an irreducible squarefree ternary word of length $n$ if and only if $n \in \{3, 6, 8, 9, 10, 11\} \cup \{m \mid m \geq 13\}$. We solve the analogous problems for irreducible overlap-free binary words and irreducible cubefree binary words with the following theorems, which are proved in Section~\ref{sec-irr}.

\begin{theorem} \label{thm-irr-over}
    There is an irreducible overlap-free binary word of length $n$ if and only if $n \in \{6, 8, 9, 10\} \cup \{m \mid m \geq 12\}$.
\end{theorem}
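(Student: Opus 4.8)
The plan is to prove the two directions separately: that no irreducible overlap-free binary word exists for $n \in \{3,4,5,7,11\}$, and that one does exist for every other length $n \geq 6$. Throughout I would exploit the fact that, with the paper's definition, an overlap $xYxYx$ has odd length $3 + 2|Y| \geq 3$, so the shortest overlaps are exactly $000$ and $111$; consequently a word contains an overlap if and only if it contains a factor of length $2p+1$ and period $p$ for some $p \geq 1$. This reduction makes the short cases tractable by hand.

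For nonexistence I would argue directly for the short lengths and by a bounded search for the rest. For $n=3$, deleting the unique interior letter yields a length-$2$ word, which is too short to contain an overlap, so no word qualifies. For $n \in \{4,5\}$, each interior deletion produces a word of length $3$ or $4$ that contains an overlap only if it contains $000$ or $111$, and a short chase of these constraints rules out every overlap-free candidate (for $n=4$ it forces the word to be a power of a single letter). For $n \in \{7,11\}$ the same idea applies but the case analysis is larger, so I would carry it out as a finite enumeration over the comparatively few overlap-free binary words of those lengths, checking that at least one interior letter is always removable.

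For existence I would first exhibit explicit irreducible overlap-free words of lengths $6,8,9,10$ and of a block of consecutive lengths starting at $12$, verifying each by hand or by a short computation. The engine for all remaining lengths would be a length-increasing operation built from the Thue--Morse morphism $\mu$, defined by $\mu(0)=01$ and $\mu(1)=10$: applying $\mu$ to an irreducible word of length $n$ is the natural candidate for producing one of length $2n$, and a companion construction (for instance $\mu$ followed by a controlled boundary patch) would target the odd length $2n+1$. Given enough consecutive base lengths, the doubling $n \mapsto \{2n,\,2n+1\}$ then fills in every length $\geq 12$ by induction, since the images of a full interval of consecutive integers again form such an interval. Overlap-freeness of the constructed words I would control using the structure theorem for overlap-free binary words, namely that every such word factors as $x\,\mu(y)\,z$ with short $x,z$ and overlap-free $y$.

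The main obstacle is verifying that the construction preserves irreducibility, which is a global condition: I must show that after applying $\mu$ (and the odd-length patch) every interior letter of the new word is still critical, i.e.\ its removal creates an overlap. Unlike overlap-freeness, this cannot be checked locally, and the delicate points are the positions straddling the block boundaries $\mu(w_i)\mu(w_{i+1})$ and the patched end, where a deletion might fail to produce an overlap even though the analogous deletion in $w$ did. The heart of the proof will therefore be a careful bookkeeping argument that, for each interior position of $\mu(w)$, exhibits the specific short overlap created by deleting it, using the rigidity of the Thue--Morse structure to guarantee that these overlaps persist.
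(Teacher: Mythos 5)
Your nonexistence side is fine in outline (it matches the paper's finite check), and your base cases are fine, but the engine you propose for all lengths $\geq 12$ --- that applying $\mu$ to an irreducible overlap-free word of length $n$ yields an irreducible one of length $2n$ --- has a genuine gap: the claim is not merely unproven in your sketch, it is false. Take the paper's own length-$6$ irreducible word $010010$. Then $\mu(010010) = 011001011001$ is overlap-free (by Thue's theorem), but deleting its second letter gives $01001011001$, which contains no factor of length $2p+1$ with period $p$ for any $p$ (one checks $p = 1, \dots, 5$ directly), hence no overlap. So $\mu(010010)$ is not irreducible, and the doubling step fails already at the first application. The intuitive reason is that irreducibility of $w$ gives you an overlap after deleting a letter of $w$, but deleting a single letter of $\mu(w)$ shifts the parity of everything to its right, so the resulting word is not $\mu$ of anything and the overlap in the preimage does not transfer. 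Your proposed ``$2n+1$ companion construction'' is left entirely unspecified and would face the same obstruction even more severely.

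The paper avoids this by never doubling: it shows that every concatenation of the blocks $T_0 = \mu^3(0) = 01101001$ and $T_1 = \mu^3(1) = 10010110$ is irreducible (reducing this to checking the four length-$16$ products $T_iT_j$), so all prefixes of $\mathbf{t}$ of length $8k$ work; it then handles the residues $1$ through $6$ modulo $8$ by prepending short explicitly checked prefixes whose overlap-freeness follows from known results about $001\mathbf{t}$ and $110110\mathbf{t}$, and handles $n \equiv 7 \pmod 8$ by a separate induction on suffixes of $\mathbf{t}$ obtained by deleting its first $14$ or $15$ letters. If you want to salvage a doubling-style argument, you would need to restrict to words with the $T_0/T_1$ block structure (where $\mu$ does preserve the relevant properties) and supply a genuinely different mechanism for odd lengths and for lengths not divisible by $8$; as written, the central step of your proof does not go through.
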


\begin{theorem} \label{thm-irr-cube}
    There is an irreducible cubefree binary word of length $n$ if and only if $n \in \{10, 14, 18, 19, 20\} \cup \{m \mid m \geq 22\}$.
\end{theorem}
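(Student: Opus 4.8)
The plan is to prove the two directions separately: for each length $n$ in the stated set I would \emph{construct} an irreducible cubefree binary word of length $n$, and for each $n$ outside the set I would show no such word \emph{exists}. Since the excluded lengths are exactly $\{1,2,\dots,9\}\cup\{11,12,13,15,16,17,21\}$, a finite family, the non-existence direction reduces to a finite verification: for each such $n$, enumerate all cubefree binary words of length $n$ (a finite, effectively generable set) and check that each one has some interior letter whose deletion leaves the word cubefree, so that it fails to be irreducible. For the smallest lengths one can also argue by hand — e.g.\ for $n=3$ the single interior deletion yields a length-$2$ word, which contains no cube — but in general I would lean on exhaustive search to discharge the finitely many exceptional lengths.

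The harder direction is existence for all lengths in $\{10,14,18,19,20\}\cup\{m\mid m\ge 22\}$. For the five sporadic lengths $10,14,18,19,20$ I would exhibit an explicit word of each length (found by computer) and verify directly — again a finite check — that it is cubefree and that deleting any interior letter introduces a cube. The real content is producing words of \emph{all} lengths $\ge 22$. Here the strategy is to fix one or more short base words together with a \emph{length-increasing operation} that maps an irreducible cubefree word of length $\ell$ to an irreducible cubefree word of length $\ell+k$ for some fixed increment(s) $k$; iterating from a complete set of base words (one per residue class modulo $k$, or a few operations with increments whose numerical semigroup covers all sufficiently large integers) then yields every length $\ge 22$. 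The spacing of the sporadic lengths $10,14,18$ in steps of $4$ suggests an increment related to $4$ is natural, so I would look for base words at lengths covering the residues modulo $4$ (including one at length $25$ for the residue that $21$ would otherwise occupy, consistent with $21$ being excluded) together with a $+4$ operation; an alternative is to use two insertion operations with coprime increments and a numerical-semigroup argument to reach all large lengths from a single base word.

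The main obstacle is designing and verifying this length-increasing operation, because it must preserve two properties at once. Cubefreeness is the more routine of the two: one can typically guarantee it by inserting a carefully chosen block, or by splicing at a position with controlled local context, and then bounding the region where a new cube could appear to a small window around the seam. Irreducibility is the delicate part, since it is a \emph{global} condition — \emph{every} interior deletion must create a cube — so the operation must (i) preserve the fact that each old interior deletion still creates a cube, and (ii) ensure that each newly inserted interior letter is itself essential, with no deletion anywhere becoming ``safe''. I expect the crux of the proof to be a case analysis near the insertion seam showing that deletions of the freshly added letters, and of the old letters adjacent to the insertion, all still produce cubes; once the operation is established, the remaining bookkeeping (assembling the base words and checking residues) is routine.
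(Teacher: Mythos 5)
Your overall architecture (explicit words for the sporadic lengths, exhaustive computer search for the finitely many excluded lengths, and an inductive construction for all large $n$) matches the paper's, and the first two parts are fine as described. The genuine gap is in the third part: your entire existence argument for $n \geq 22$ rests on a hypothesized ``length-increasing operation'' with increment $4$ (or a pair of coprime increments) that preserves both cubefreeness and irreducibility, and you never construct it. This is not a routine detail to be filled in --- it is the whole mathematical content of the theorem, and there are concrete reasons to doubt it exists in the form you propose. If the operation inserts letters mid-word, then a deletion of an old interior letter whose witnessing cube spanned the insertion point may no longer create a cube, so property (i) in your own list can fail. If instead the operation appends at the end, then for an \emph{arbitrary} irreducible cubefree word there is no guarantee that any $4$-letter extension is simultaneously cubefree and restores irreducibility at the seam (the formerly-last letter becomes interior and must now be essential); to control this you are forced to restrict to a structured family, at which point the increment-$4$ arithmetic is no longer something you get to choose --- it is dictated by the family.

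The paper resolves exactly this difficulty differently: it builds two non-uniform morphisms $\varphi_1, \varphi_2$ (with $|\varphi_i(0)| = 26$, $|\varphi_i(1)| = 34$), certifies that they preserve cubefreeness via the Richomme--Wlazinski criterion (Lemma~\ref{lem-cubefree-morph}, a finite check on images of cubefree words of length $7$), and checks that the images of all cubefree words of length at most $2$ are irreducible, which makes the image of \emph{every} prefix of a cubefree word irreducible. Applying $\varphi_i$ to prefixes of $\mathbf{t}$ gives irreducible cubefree words whose lengths have gaps of $26$ or $34$, and those gaps are filled by prepending one of $33$ explicit correction prefixes; the cubefreeness of the prepended words then needs its own argument (no prefix of $\varphi_i(\mathbf{t})$ is a square, via Lemma~\ref{lem-tm-sq}), a step your plan does not anticipate at all. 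So your proposal is not a complete proof: to make it one you would either have to actually exhibit and verify your $+4$ operation on some explicit family (and there is no evidence such a family exists), or replace that step with a morphism-based construction of the kind the paper uses.
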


As a third sense in which a word can be barely squarefree, overlap-free, or cubefree, we introduce the notion of delicacy. A \emph{delicate} squarefree word is a nonempty squarefree word such that changing any one of its letters to another letter from the alphabet creates a square. The definition again analogously extends to overlap-free and cubefree words. We determine the possible lengths of delicate squarefree ternary words, delicate overlap-free binary words, and delicate cubefree binary words with the following theorems, which are proved in Section~\ref{sec-del}.

\begin{theorem} \label{thm-del-sq}
    There is a delicate squarefree ternary word of length $n$ if and only if $n \in \{5\} \cup \{m \mid m \geq 7\}$.
\end{theorem}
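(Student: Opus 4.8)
The plan is to prove Theorem~\ref{thm-del-sq} by establishing the two directions separately: first ruling out the small lengths $n \in \{1,2,3,4,6\}$, then exhibiting delicate squarefree ternary words for $n = 5$ and for every $n \geq 7$. For the negative direction, I would argue directly. A delicate word must have the property that \emph{every} single-letter change produces a square, so in particular each interior position is quite constrained. For very short lengths the condition is so restrictive that a short case analysis (or exhaustive computer check, which the paper likely permits given the style) shows no delicate word exists. The key combinatorial observation I would try to exploit is that if changing the letter at position $i$ to some other letter $c$ creates a square, that square must actually involve position $i$; tracking which squares can be ``activated'' by each of the two possible alternative letters at each position gives strong local constraints that a length-$1,2,3,4$ word cannot simultaneously satisfy (e.g.\ too few positions to host the required squares), and length $6$ must be excluded by a more careful finite check.

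For the positive direction, the natural strategy is to split into the base case $n = 5$ (a single explicit word, verified by hand: exhibit a squarefree ternary word of length $5$ and check that each of its five letters, changed to either of the other two symbols, yields a square) and the infinite family $n \geq 7$. For the infinite family I would look for a uniform construction, ideally via a morphism or a small number of parametrized ``gadget'' words that can be concatenated or padded to hit every residue class modulo some period. A clean approach is to find a few short delicate squarefree seeds of consecutive lengths (say lengths $7,8,9,\dots$ up to one full period $p$) and a length-preserving-modulo-$p$ operation—such as inserting a fixed squarefree block that preserves both squarefreeness and delicacy—so that from finitely many seeds one reaches all larger lengths. Alternatively, one can apply a squarefree morphism $h$ (like one of the classic uniform ternary squarefree morphisms) to a delicate word and argue that $h$ preserves or controls delicacy, then adjust lengths by the morphism's expansion factor together with a handful of explicit short words to fill the gaps.

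The main obstacle will be the positive direction's delicacy-preservation argument: squarefreeness is comparatively easy to maintain (morphisms and careful concatenation handle it), but \emph{delicacy} is a global ``every change creates a square'' condition that is not obviously preserved by morphisms or by padding. In particular, when I insert a block or apply a morphism, I must check that the newly introduced letters are \emph{also} delicate—that changing any one of them forces a square—and that the old letters remain delicate in the longer word (a change that previously created a square might now fail to, if the relevant square was near the boundary where I inserted material). I would therefore design the inserted gadget and the seed words so that each position's ``witnessing square'' is \emph{local}, lying entirely within a bounded window, so that insertions far from a given position cannot destroy its witness; this localization is what makes a finite seed set suffice. I expect the bulk of the work to be a careful but finite verification that a small explicit family of seeds (lengths $5$ and $7$ through roughly $7+p-1$) together with one safe insertion operation covers all required lengths while provably preserving both squarefreeness and delicacy.
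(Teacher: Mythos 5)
Your overall strategy matches the paper's: a computer/finite check for the excluded lengths, an explicit word for $n=5$, and for $n\geq 7$ a uniform squarefree morphism whose image blocks are each individually delicate, padded by finitely many short prefixes to hit every residue class. Your key observation about localization is also the right one, and it is exactly why the morphism approach works: if each block $\varphi(a)$ is itself delicate squarefree, then any single-letter change in a concatenation of such blocks creates a square entirely inside one block, and that square survives in the longer word. So the delicacy-preservation step, which you single out as ``the main obstacle,'' is in fact the easy part of the argument; delicacy (unlike extremality or irreducibility) is monotone under extension in the sense that a witnessing square inside a factor remains a factor of any superword.

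The genuine gap is on the other side, in the step you dismiss as ``comparatively easy.'' First, your proposed length-adjustment operation --- inserting a fixed block into the interior of a squarefree word --- will generally fail: insertions into squarefree words routinely create squares at the seams (indeed, extremal squarefree words, for which \emph{every} insertion creates a square, exist in abundance), and there is no reason a single fixed gadget can be inserted safely at arbitrary positions. The paper instead \emph{prepends} one of ten explicit prefixes to $\varphi$-images of prefixes of the ternary Thue--Morse word $\mathbf{v}$, and proving that these prefixed words are squarefree is the nontrivial content: it requires knowing that $2\mathbf{v}$ is squarefree (proved via Berstel's characterization of $\mathbf{v}$ in terms of gaps between $1$s in the Thue--Morse word $\mathbf{t}$ and the overlap-freeness of $10\mathbf{t}$), plus factor-exclusion arguments (e.g.\ that the image word avoids $20120$, so any square crossing the seam would be too short or forced into a contradiction). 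Without specifying the morphism, the padding prefixes, and these squarefreeness arguments, your plan does not yet close; and as written, the one concrete mechanism you propose (interior insertion) is the one that would break.
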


\begin{theorem} \label{thm-del-over}
    There is a delicate overlap-free binary word of length $n$ if and only if $n \in \{m \mid m \geq 7\}$.
\end{theorem}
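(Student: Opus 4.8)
The statement is an equivalence, so I would prove the two directions separately. For the non-existence direction ($n \le 6$) the plan is a constrained finite check. Since an overlap $xYxYx$ has length $2|xY|+1$, every overlap has odd length, so for $n \le 6$ the only overlaps that can possibly be created by a flip are the length-$3$ overlap $aaa$ and the length-$5$ overlap $ababa$; this drastically limits the analysis. For $n \le 2$ no word is long enough to contain any overlap, so no flip can create one. For $n = 3$ a short counting argument already works: forcing the flip at position $1$ to produce $aaa$ requires $w_2 = w_3$, forcing the flip at position $3$ to produce $aaa$ requires $w_1 = w_2$, so $w_1 = w_2 = w_3$, which is itself an overlap — contradiction. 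I would then dispatch $n = 4, 5, 6$ by the same mechanical criterion, enumerating the (few) overlap-free binary words of each length up to complementation and reversal and exhibiting, for each, a position whose complementation leaves the word overlap-free.

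For the existence direction I plan the standard two-part scheme: exhibit explicit delicate overlap-free words for all lengths in a base range $7 \le n \le N$, and supply a length-increasing operation that converts a delicate overlap-free word into a longer one, designed so that iterating it from the base cases reaches every remaining length. Concretely, I would aim for an \emph{additive} gadget that lengthens a word by a fixed amount $p$ (for example by splicing in a short controlled block), and then provide one base word in each residue class modulo $p$; the interval shape $\{m \mid m \ge 7\}$ of the answer is consistent with a small step $p$. Overlap-freeness of the extended word can be controlled with standard tools — the Thue--Morse morphism $\mu : 0 \mapsto 01,\, 1 \mapsto 10$, which is known to preserve overlap-freeness, and the Restivo--Salemi structure theorem writing every overlap-free word as $x\,\mu(v)\,y$ with $x, y \in \{\epsilon, 0, 1, 00, 11\}$.

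The hard part will be preserving delicacy, which is a fragile global property: every one of the $n$ single-letter flips must create an overlap, so the extension operation has to preserve or regenerate all $n$ overlap-witnesses while also forcing the newly added positions to witness overlaps. I therefore expect to need an operation local enough that each original witnessing overlap either survives intact or is replaced by an explicitly identifiable new one, with boundary behavior engineered so that flips at the freshly inserted positions produce overlaps too. Both the base cases and the inductive step I would verify by the uniform criterion used throughout the paper: a binary word is delicate exactly when, for every position $i$, the complemented word contains an overlap, which reduces to checking for a factor $aaa$ or a factor $aYaYa$ straddling position $i$. Carrying this check through the chosen construction, and confirming the finite base range $7 \le n \le N$ directly, would complete the proof.
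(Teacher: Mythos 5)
Your overall strategy matches the paper's: a finite check rules out $n \le 6$ (the paper does this by computer search; your parity observation that every overlap has odd length, so only $aaa$ and $xyxyx$ can arise in words of length at most $6$, is a clean way to keep that check small), and the existence direction is handled by base cases in each residue class modulo a fixed step $p$ together with a length-$p$ extension preserving both overlap-freeness and delicacy. The gap is that you never exhibit the gadget, and you yourself flag exactly where it is needed --- ``the hard part will be preserving delicacy'' --- without resolving it. That is not a formality: the gadget is the entire content of the existence direction, and a generic ``splice in a short controlled block'' is actively dangerous, since an overlap witnessing a flip far from the splice point can still straddle the splice and be destroyed by it.

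The paper's resolution is to take $p = 8$ and to \emph{append} rather than splice: the words are prefixes of $\mathbf{t}$ with its first $k$ letters removed, where $k$ is chosen per residue class of $n$ modulo $8$ (namely $k = 0, 7, 6, 13, 12, 3, 10, 1$ for residues $0, \dots, 7$, with the single sporadic word $001011001$ covering $n = 9$), and each induction step appends the next block $T_0 = \mu^3(0) = 01101001$ or $T_1 = \mu^3(1) = 10010110$ of $\mathbf{t}$. Appending trivially preserves every existing overlap-witness (they remain factors), overlap-freeness is free because the result is still a factor of $\mathbf{t}$, and --- the key fact your proposal is missing --- $T_0$ and $T_1$ are themselves delicate overlap-free, so every flip inside a newly appended block is witnessed by an overlap lying entirely within that block. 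With that one observation your induction closes; without it, the proposal is a correct plan but not a proof. (Your appeal to the Restivo--Salemi factorization is unnecessary here, since all the words used are factors of $\mathbf{t}$.)
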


\begin{theorem} \label{thm-del-cube}
    There is a delicate cubefree binary word of length $n$ if and only if $n \in \{20, 21, 22, 29, 33, 34, 35\} \cup \{m \mid m \geq 38\}$.
\end{theorem}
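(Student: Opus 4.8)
The plan is to prove Theorem~\ref{thm-del-cube} by establishing two complementary facts: that no delicate cubefree binary word exists for the excluded lengths, and that such words do exist for every length in the claimed set. The overall structure mirrors the standard approach for results of this type, where one combines a finite computational search at small lengths with an infinite-family construction that covers all large lengths.

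First I would handle the small cases by exhaustive computation. For lengths up to some explicit bound (large enough to contain all the ``sporadic'' values $20,21,22,29,33,34,35$ and to reach the start $38$ of the eventual arithmetic tail), I would enumerate all cubefree binary words of each length and test the delicacy condition directly: a word $w$ is delicate cubefree iff it is cubefree and, for every position $i$ and every letter $a$ different from $w_i$, the word obtained by setting position $i$ to $a$ contains a cube. Over a binary alphabet this test is especially clean, since there is only one alternative letter at each position, so delicacy says that flipping any single bit creates a cube. This search simultaneously certifies existence at the listed short lengths and certifies \emph{non}existence at the excluded lengths $n \le 37$ with $n \notin \{20,21,22,29,33,34,35\}$. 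The correctness of the nonexistence claims rests entirely on the exhaustiveness of this enumeration, so I would be careful to state the computational bound precisely and argue why no word of an excluded length can be delicate cubefree.

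The heart of the argument is the construction covering all $n \ge 38$. The natural strategy is to exhibit a finite set of explicit ``seed'' delicate cubefree words whose lengths cover a full residue system modulo some period $p$, together with a length-preserving-modulo-$p$ surgery that lengthens a delicate word by $p$ while preserving both cubefreeness and delicacy. Concretely, I would look for a two-sided or one-sided extension operation---typically splicing in a fixed block or iterating a morphism-like substitution---such that if $w$ is delicate cubefree of length $n \ge N_0$, then the modified word is delicate cubefree of length $n+p$. Given seeds at lengths $38, 39, \dots, 38+p-1$, repeated application then fills in every $n \ge 38$. Establishing this requires verifying three things about the extension: that it preserves cubefreeness, that every bit-flip in the newly inserted region creates a cube, and---most delicately---that the extension does not destroy the cube-forcing property of any flip in the original word (i.e., a flip that previously created a cube must still create a cube, and no flip that previously failed suddenly becomes relevant).

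I expect the main obstacle to be exactly this last point: delicacy is a fragile global property, and a local surgery can easily disrupt which cubes are forced by which flips. Unlike cubefreeness, which is preserved by many natural operations, delicacy can fail under extension because flipping a bit near the splice point might create a cube in the original word that no longer appears after insertion, or because a previously cube-forced position might have relied on a factor that the surgery alters. The cleanest way around this is to choose the inserted block so that all forced cubes are \emph{local}---each bit-flip creates a cube entirely contained in a bounded window around its position---so that inserting a sufficiently ``inert'' and compatible block far from each flip leaves every forced cube intact. Identifying such a block and proving the locality of the forcing is the crux, and I would expect to need a case analysis by the distance of the flipped position from the splice, handling positions in the old part, the new block, and the boundary seams separately. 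Given the explicit and somewhat irregular sporadic set, I anticipate that finding a single clean period $p$ and uniform seeds may be difficult, so a fallback is to present several constructions keyed to residue classes modulo $p$, each verified by the same locality principle but with its own seed words drawn from the computational search.
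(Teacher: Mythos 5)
Your overall architecture---a finite exhaustive search certifying both the sporadic lengths and the nonexistence at excluded lengths, plus a periodic construction for all $n\geq 38$ hinging on the \emph{locality} of the forced cubes---is the right one, and it is in fact the skeleton of the paper's proof. But the proposal stops exactly where the mathematical content begins: you never exhibit the period, the seeds, or the extension operation, and you explicitly defer the crux (``identifying such a block and proving the locality of the forcing''). As it stands there is no proof of the existence half for $n\geq 38$, only a description of what a proof would have to accomplish.

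Moreover, the specific mechanism you lean toward---splicing a fixed block into the \emph{middle} of an existing delicate word---is the variant most exposed to the seam problems you yourself flag: a flip in the old part whose forced cube straddled the splice point loses that cube, and the insertion can create new cubes across the seam. The paper sidesteps all of this by never operating in the interior. It takes a $22$-uniform morphism $\varphi$ that preserves cubefreeness (checked via Lemma~\ref{lem-cubefree-morph}) and whose images $\varphi(0)$ and $\varphi(1)$ are each, on their own, delicate cubefree; then the image of any prefix of $\mathbf{t}$ is automatically delicate, because flipping any letter creates a cube entirely inside its own $22$-letter block---this is your locality principle, realized by making each block self-certifying rather than by analyzing a window around a splice. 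The remaining residue classes mod $22$ are reached by \emph{prepending} one of $21$ explicit prefixes, whose delicacy is checked letter by letter and whose cubefreeness follows from a synchronization claim (no prefix of $\varphi(\mathbf{t})$ is a square, via Lemma~\ref{lem-tm-sq}), so flips in the morphic part are untouched. Two lengths, $20$ and $33$, still need standalone explicit words. If you want to complete your plan, replacing the mid-word surgery with this left-extension-of-morphic-images scheme is the step you are missing.
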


In Section~\ref{sec-ex-irr-del}, we exhibit an infinite family of overlap-free binary words that are simultaneously extremal, irreducible, and delicate.

\begin{theorem} \label{thm-ex-irr-del}
    There are infinitely many simultaneously extremal, irreducible, and delicate overlap-free binary words.
\end{theorem}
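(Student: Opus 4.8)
The plan is to exhibit an explicit infinite family of overlap-free binary words and verify that every member is simultaneously extremal, irreducible, and delicate. The natural candidate is the Thue–Morse word $\mathbf{t}$ (or words built from it), since its structure is well understood and it is the canonical source of overlap-free binary words. I would first look for a suitable family of finite factors or reflections of $\mathbf{t}$ whose lengths grow without bound; a good guess is that prefixes of $\mathbf{t}$ of length $2^k$ or $2^k+c$, or the images of short seed words under the Thue–Morse morphism $\mu\colon 0\mapsto 01,\,1\mapsto 10$, will do the job, possibly after bordering the seed with a fixed prefix and suffix to control the boundary behavior. The key tool throughout is that overlaps in binary words are highly constrained: a theorem of the Thue–Morse type says that any overlap-free binary word essentially factors through $\mu$, so I can reduce questions about the long word to questions about a much shorter desubstituted word.

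First I would fix the family, say $w_k$ for $k$ large, and verify overlap-freeness directly from the structure of $\mathbf{t}$. Then I would prove the three "barely" properties in turn. For \emph{extremality}, I must show that inserting either letter $0$ or $1$ at any of the positions (including the two ends) creates an overlap; I would argue position by position, using the desubstitution lemma to locate the forced overlap in the image of a short word, so that only finitely many local patterns need checking, independent of $k$. For \emph{irreducibility}, I must show that deleting any interior letter creates an overlap; again I would reduce to a bounded local analysis by examining how a deletion disrupts the alternating $\mu$-block structure and forces a short repetition nearby. For \emph{delicacy}, I must show that flipping any single letter creates an overlap; here the relevant fact is that flipping a letter in $\mu(u)$ breaks the property that consecutive blocks are $01$ or $10$, typically creating a $00$ or $11$ that seeds an overlap, and I would make this precise locally.

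The main obstacle will be handling the three properties simultaneously with a single family: a word engineered to force overlaps on deletion need not force them on insertion or on letter-change, and vice versa, so the delicate part is finding one construction that is tight in all three senses at once. Concretely, I expect the hardest verifications to be at and near the two endpoints and at the boundaries between $\mu$-blocks, where the forced-overlap argument is most sensitive to off-by-one effects; the interior positions should succumb to a uniform local argument via desubstitution, but the boundary cases likely require a separate, careful check that the chosen prefix/suffix decorations indeed close off every escape. Since the claim only asserts the existence of infinitely many such words, I do not need to classify all admissible lengths (that refinement is the content of the per-length theorems above); it suffices to produce one valid $w_k$ for each $k$ in an infinite set, which relaxes the boundary bookkeeping considerably and lets me absorb any finitely many bad small cases by simply taking $k$ large.
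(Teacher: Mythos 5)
Your general strategy---iterate the Thue--Morse morphism $\mu$ on a seed word and verify the three properties by a bounded local analysis---is the same as the paper's, and for irreducibility and delicacy your plan does work essentially as the paper carries it out: a $\mu$-iterated word is a concatenation of the blocks $T_0=01101001$ and $T_1=10010110$, and it suffices to check that these blocks (for delicacy) and their pairwise concatenations (for irreducibility) have the required property. However, there is a genuine gap on the extremality side, and it sits exactly where you locate the difficulty. The concrete candidates you name cannot be extremal: any prefix $p$ of $\mathbf{t}$ extends on the right to the next prefix $pa$ of $\mathbf{t}$, which is overlap-free, so appending $a$ creates no overlap and $p$ is not extremal; likewise $\mu^k(s)$ fails whenever the seed $s$ extends to a longer overlap-free word, since $\mu^k(sa)$ is then overlap-free. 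So extremality forces a carefully engineered seed that is non-extendable at both ends \emph{and} remains so after every application of $\mu$; the paper uses the specific $32$-letter word $w_0 = 01100110100110010110011010011001$, which is not a prefix of $\mathbf{t}$, and exploits that $w_i$ is (up to complementation) its own reversal and has the endpoint structure $X0X0$ / $X1X1$ to handle insertions at the ends. Your hedge about ``bordering the seed with a fixed prefix and suffix'' is the right instinct, but no working seed is produced, and producing one is the crux of the theorem.

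A second missing ingredient is the lemma that makes the extremality check finite: an insertion strictly in the interior shifts the entire suffix by one position and destroys the $\mu$-block alignment globally, so it is not immediately a ``finitely many local patterns'' problem. The paper invokes the Mol--Rampersad--Shallit lemma that for any overlap-free binary word $w = w'w''$ with $|w'|,|w''| \geq 5$, every extension $w'aw''$ contains an overlap; this reduces extremality to the finitely many insertions within four letters of an endpoint, independent of $k$. Without this lemma (or an equivalent desubstitution argument, which you gesture at but do not supply), the position-by-position extremality verification is unbounded. Finally, note that your closing remark---that you may discard finitely many bad small cases by taking $k$ large---does not help here: the failure of extremality for prefixes of $\mathbf{t}$ and for $\mu^k$ of extendable seeds persists for all $k$, so the obstruction is not a small-case issue but a structural one in the choice of family.
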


Finally, in Section~\ref{sec-k-del}, we conclude by introducing a natural generalization of delicacy and raising a question about it for further study.

\section{Irreducible Words} \label{sec-irr}

Let $\mu$ be the binary morphism defined by $\mu(0) = 01$ and $\mu(1) = 10$, and let $\mathbf{t} = \mu^\omega(0)$ be the Thue--Morse word, which is known to be overlap-free \cite{Thue1912}. For Theorem~\ref{thm-irr-over} we need the following lemma.

\begin{lemma}
    $010110 \mathbf{t}$ and $101001101001 \mathbf{t}$ are overlap-free.
\end{lemma}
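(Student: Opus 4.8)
The plan is to prove that each of the two concatenated words is overlap-free by reducing the question to a finite check together with the known overlap-freeness of the Thue--Morse word $\mathbf{t}$. The key structural fact I would exploit is that $\mathbf{t}$ is overlap-free, so any putative overlap in $010110\,\mathbf{t}$ or $101001101001\,\mathbf{t}$ cannot lie entirely inside the Thue--Morse suffix; it must involve the finite prefix $010110$ (respectively $101001101001$). Thus an overlap $xYxYx$ in the whole word must straddle the boundary between the prefix and $\mathbf{t}$, which bounds how far into $\mathbf{t}$ it can reach: its total length is at most roughly twice the prefix length, so only a bounded prefix of $\mathbf{t}$ is relevant.

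First I would record the relevant finite prefix of $\mathbf{t}$, namely enough leading letters of $\mathbf{t} = 0110100110010110\ldots$ to cover any overlap that begins within the finite head. Concretely, if the head has length $k$ (here $k=6$ or $k=12$) and an overlap $xYxYx$ has its first letter at some position within the head, then since overlaps have their middle block repeated, the overlap cannot extend more than about $2k$ positions past the start; so it suffices to examine the word $010110\,u$ (respectively $101001101001\,u$) where $u$ is a prefix of $\mathbf{t}$ of length on the order of $2k$. Second, I would make this bound rigorous: suppose for contradiction that $W = hv$ (with $h$ the finite head and $v$ a prefix of $\mathbf{t}$) contains an overlap as a factor; let the overlap start at position $i$ and have period $p$ and total length $2p+1$. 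If $i \ge |h|$ the overlap lies inside $\mathbf{t}$, contradicting overlap-freeness of $\mathbf{t}$. Hence $i < |h|$, which forces $i$ and $p$ to be small, so the whole overlap lies inside a prefix of $W$ of bounded length.

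Third, having reduced to a finite prefix, I would verify by direct inspection (a finite, routine computation) that the explicitly written prefixes $010110\,u$ and $101001101001\,u$ contain no overlap. This amounts to checking all candidate centers and periods up to the bound established above, which is a terminating, mechanical search.

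The main obstacle is making the length bound on a straddling overlap fully precise, so that the finite check is genuinely exhaustive: I need to argue carefully that any overlap meeting the head has bounded length, rather than merely bounded starting position, since an overlap could in principle begin inside the head but stretch far into $\mathbf{t}$. The resolution is that the repeated structure of an overlap, combined with overlap-freeness of $\mathbf{t}$, pins down both the period and the extent. Once that bound is nailed down, the remainder is the finite verification, which I would present compactly rather than enumerate in full.
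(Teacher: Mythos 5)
There is a genuine gap at exactly the step you flag as the main obstacle, and the resolution you sketch does not work. Overlap-freeness of $\mathbf{t}$ does not bound the period of an overlap that begins in the finite head. If $W = h\mathbf{t}$ with $|h| = k$ and $W$ contains an overlap of period $p$ starting at position $i < k$, then the portion of that overlap lying inside $\mathbf{t}$ has length $i + 2p + 1 - k \le 2p$; a word of period $p$ and length at most $2p$ is not an overlap, so no contradiction with the overlap-freeness of $\mathbf{t}$ arises, for \emph{any} $p$. What such an overlap actually forces, after unwinding indices, is that the prefix of $\mathbf{t}$ of length $i + p - k + 1$ recurs at position $p$ of $\mathbf{t}$, preceded there by the word $h[i]\cdots h[k-1]$; ruling this out for all $p$ is a nontrivial statement about where prefixes of $\mathbf{t}$ recur, not a consequence of ``the repeated structure of an overlap.'' Your claimed length bound of roughly $2k$ is in fact false in general: the word $11001\,\mathbf{t}$ has a head of length $k = 5$ yet begins with the overlap $(110010110100)^2 1$ of length $25$, because the length-$8$ prefix $01101001$ of $\mathbf{t}$ recurs at position $12$ with the right left context. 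So your finite check is not known to be exhaustive, and the entire difficulty of the lemma is concentrated in the unproved bound.

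The paper avoids this issue completely. It writes $010110\,\mathbf{t} = \mu(001\,\mathbf{t})$ and $101001101001\,\mathbf{t} = \mu(110110\,\mathbf{t})$, invokes Thue's theorem that $w$ is overlap-free if and only if $\mu(w)$ is, and cites Allouche, Currie, and Shallit for the overlap-freeness of $001\,\mathbf{t}$ and $110110\,\mathbf{t}$. The content your proposal is missing is essentially the content of that cited result: one genuinely needs either a lemma controlling recurrences of prefixes of $\mathbf{t}$ with prescribed left context, or a descent through $\mu$ as in the paper, before any finite verification can be declared exhaustive.
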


\begin{proof}
    Thue \cite{Thue1912} proved that for any word $w$, $w$ is overlap-free if and only if $\mu(w)$ is. Since $010110 \mathbf{t} = \mu(001 \mathbf{t})$ and $101001101001 \mathbf{t} = \mu(110110 \mathbf{t})$, it suffices to know that $001 \mathbf{t}$ and $110110 \mathbf{t}$ are overlap-free, which was proven by Allouche, Currie, and Shallit \cite{Allouche1998}.
\end{proof}

We now prove Theorem~\ref{thm-irr-over}, which says that there is an irreducible overlap-free binary word of length $n$ if and only if $n \in \{6, 8, 9, 10\} \cup \{m \mid m \geq 12\}$.\footnote{Jeffrey Shallit has pointed out that this theorem can also be proven using the automated theorem prover Walnut.}

\begin{proof}[Proof of Theorem~\ref{thm-irr-over}]
    First, suppose $n \in \{6, 10\}$. In this case we use one of the following words:
    \[
    \begin{tabular}{ c | l }
        6 & 010010 \\
        10 & 0100101101.
    \end{tabular}
    \]

    Now suppose $n \in \{8, 9\} \cup \{m \mid m \geq 12\}$. Let $\mathbf{t}_k$ be the first $8k$ letters of $\mathbf{t}$. We claim that $\mathbf{t}_k$ is irreducible overlap-free. Observe that $T_0 = 01101001$ and $T_1 = 10010110$ are irreducible overlap-free. Since $\mathbf{t}_k$ is a concatenation of copies of $T_0$ and $T_1$, it suffices to verify that $T_0 T_0$, $T_0 T_1$, $T_1 T_0$, and $T_1 T_1$ are irreducible overlap-free. (We have incidentally proven that $\mathbf{t}$ is irreducible overlap-free.)
    
    We first prove the theorem when $n \not\equiv 7 \pmod 8$. We denote the length of a word $w$ by $|w|$. Let $k$ be the largest integer $k$ such that $|\mathbf{t}_k| \leq n$. If $n - |\mathbf{t}_k| = 0$, we're done. Otherwise, based on $n - |\mathbf{t}_k|$, we choose the word of the desired length from the following table.
    \[
    \begin{tabular}{ c | l }
        1 & $1 \mathbf{t}_k$ \\
        2 & $1001101001 \mathbf{t}_{k-1}$ \\
        3 & $01001101001 \mathbf{t}_{k-1}$ \\
        4 & $1001 \mathbf{t}_k$ \\
        5 & $01001 \mathbf{t}_k$ \\
        6 & $010110 \mathbf{t}_k$
    \end{tabular}
    \]
    
    The irreducibility of these words can be verified by removing the interior letters at the beginning one at a time and finding the overlaps in the resulting words. It follows from the lemma that these words are overlap-free.
    
    Next, we prove the result when $n \equiv 7 \pmod 8$ by giving a family of words that are irreducible overlap-free for $n \geq 39$ and $n \equiv 7 \pmod{16}$ and a family of words that are irreducible overlap-free for $n \geq 15$ and $n$ congruent to 15, 23, or 31 modulo 32. The first family is obtained by removing the first 14 letters of $\mathbf{t}$ and then taking prefixes, while the second is obtained by removing the first 15 letters of $\mathbf{t}$ and then taking prefixes. The words in these families are overlap-free since $\mathbf{t}$ is, so we only need to prove irreducibility.
    
    We prove irreducibility by induction on the length. The base cases can be verified. For the first family, we now show that appending the next 16 letters to a word in the family creates another irreducible overlap-free word. Notice that the next 16 letters must be of the form $z = X_1 T_i X_2$, where $X_1 \in \{001, 110\}$, $X_2 \in \{01101, 10010\}$, and $i \in \{0,1\}$. Not every combination of these is a possible value of $z$, however. First, $z \neq 001 T_0 01101$ and $z \neq 110 T_1 10010$ because they require $z$ to be from a factor of the form $T_0 T_0 T_0$ or $T_1 T_1 T_1$, respectively. Further, $z \neq 001 T_0 10010$ and $z \neq 110 T_1 01101$ because they require $z$ to be from a factor of the form $T_0 T_0 T_1$ or $T_1 T_1 T_0$, respectively, and such factors can only occur in $\mathbf{t}$ starting at an index congruent to 8 modulo 16. Thus, the only possibilities for $z$ are $001 T_1 01101$, $001 T_1 10010$, $110 T_0 01101$, and $110 T_0 10010$. In the first two cases, the 5 letters preceding them must be 01101, and these suffixes of length 21 are irreducible overlap-free. Similar reasoning holds for the latter two cases, in which case the 5 letters preceding them must be 10010.
    
    We now prove the induction hypothesis for the second family by showing that appending the next 32 letters to a word in the family creates another irreducible overlap-free word. The next 32 letters must be of the form $z = X_1 T_i T_j T_k X_2$, where $X_1 \in \{01, 10\}$, $X_2 \in \{011010, 100101\}$, and $i, j, k \in \{0,1\}$. Again, not every combination of these is a possible value of $z$. First, any combination that requires $z$ to be from a factor containing an overlap is impossible. Further, the combinations that require $z$ to be from a factor of the form $T_0 T_1 T_0 T_1 T_1$ or $T_1 T_0 T_1 T_0 T_0$ are impossible because such factors can only occur in $\mathbf{t}$ starting at an index congruent to 16 modulo 32. Finally, $\mathbf{t}$ contains no factors of the form $T_0 T_0 T_1 T_0 T_0$ or $T_1 T_1 T_0 T_1 T_1$. This leaves 10 possibilities for $z$. For each possibility, we can deduce the 6 letters preceding them (either 011010 or 100101), and the resulting suffixes of length 38 are irreducible overlap-free.
    
    For $n \not\in \{6, 8, 9, 10\} \cup \{m \mid m \geq 12\}$, a computer search shows there are no irreducible overlap-free binary words of length $n$.
\end{proof}

For Theorem~\ref{thm-irr-cube}, we need the following lemmas due to Richomme and Wlazinski \cite[Corollary~1]{Richomme2000} and Shur \cite[Proposition~2.1]{Shur2012}, respectively.

\begin{lemma} \label{lem-cubefree-morph}
    A binary morphism preserves cubefreeness if and only if the images of all cubefree binary words of length 7 are cubefree.
\end{lemma}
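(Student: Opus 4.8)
The plan is to prove Lemma~\ref{lem-cubefree-morph} by establishing both directions of the biconditional, with the nontrivial direction being a ``local-to-global'' argument that controls cubes arising under the morphism by examining only short factors of the preimage.

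First I would dispatch the easy direction. If a binary morphism $h$ preserves cubefreeness, then in particular it maps every cubefree word---including every cubefree word of length $7$---to a cubefree word, so the images of all length-$7$ cubefree binary words are cubefree. This is immediate from the definition and requires no work.

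The substance is the converse: assuming that $h$ maps every cubefree binary word of length $7$ to a cubefree word, I want to conclude that $h(w)$ is cubefree for \emph{every} cubefree binary word $w$. The natural strategy is contrapositive combined with a ``minimal counterexample / bounded synchronization'' argument. Suppose some cubefree $w$ has $h(w)$ containing a cube $ZZZ$. The key idea is that a cube in the image $h(w)$ cannot require arbitrarily much of the preimage to witness it: because we are over a binary alphabet and the images $h(0), h(1)$ have bounded length, any cube occurring in $h(w)$ must already occur in the image $h(u)$ of some short factor $u$ of $w$. Concretely, I would argue that there is a uniform bound $N$ (here the claim is effectively $N = 7$) such that if $h(w)$ contains a cube, then some factor $u$ of $w$ with $|u| \le N$ already has $h(u)$ containing a cube. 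Since every factor of a cubefree word is cubefree, this factor $u$ is a cubefree binary word of length at most $7$, contradicting the hypothesis (after also checking that the hypothesis for length exactly $7$ subsumes all shorter lengths, which it does because short cubefree words extend to length-$7$ cubefree words, or can be handled directly).

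The main obstacle---and the heart of any honest proof---is establishing that the constant $7$ actually suffices, i.e.\ that no cube in $h(w)$ can ``span'' more than $7$ consecutive letters of $w$ without a shorter witness. This is a synchronization/combinatorial-on-words estimate: one analyzes how a factor $ZZZ$ of $h(w)$ aligns with the block decomposition $h(w) = h(a_1)h(a_2)\cdots$, bounds the number of blocks $h(a_i)$ that $ZZZ$ can overlap in terms of $|Z|$ and the block lengths, and shows that any genuinely ``long-range'' cube forces a shorter cube or a repetition already visible within $7$ letters. Rather than reprove this delicate bound from scratch, I would invoke it as the cited result of Richomme and Wlazinski \cite[Corollary~1]{Richomme2000}, whose general test-set theorem for cubefreeness of binary morphisms gives precisely the threshold $7$; the role of the binary alphabet and of the specific constant is exactly what their argument pins down, and reproducing it in full is the genuinely technical step one is permitted to cite here.
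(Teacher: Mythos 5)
The paper does not prove this lemma at all --- it is stated as a quoted result, namely Corollary~1 of Richomme and Wlazinski \cite{Richomme2000} --- and your proposal, after correctly dispatching the trivial direction and sketching the standard local-to-global reduction, ultimately defers the only substantive step (that the threshold $7$ suffices) to that very same citation, so your approach coincides with the paper's. Just be aware that what you have written is an annotated citation rather than a proof: the synchronization bound you identify as ``the heart of any honest proof'' is precisely the content of the result you invoke, so nothing beyond the trivial direction is actually established independently.
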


\begin{lemma} \label{lem-tm-sq}
    No prefix of $\mathbf{t}$ is a square.
\end{lemma}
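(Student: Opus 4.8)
The plan is to argue by infinite descent on the length of a square prefix, exploiting the self-similarity $\mathbf{t} = \mu(\mathbf{t})$ together with the injectivity of $\mu$ and the overlap-freeness of $\mathbf{t}$. Suppose for contradiction that $\mathbf{t}$ has a square prefix, and let $XX$ be a shortest one; write $p = |X|$ and index $\mathbf{t} = t_0 t_1 t_2 \cdots$ so that $t_i = t_{i+p}$ for $0 \le i \le p-1$. I would first record the two facts about blocks that drive everything: since $t_{2j} t_{2j+1} = \mu(t_j)$, each block satisfies $t_{2j} = t_j$ and $t_{2j+1} = 1 - t_j$, so the two letters of a block always differ, whereas a straddling pair $t_{2j+1} t_{2j+2}$ is equal if and only if $t_j \ne t_{j+1}$.

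First I would dispose of the case that $p$ is even. Here $p$ is a block boundary, so $X = \mu(t_0 \cdots t_{p/2 - 1})$ while the second copy equals $\mu(t_{p/2} \cdots t_{p-1})$; equality of the two copies together with injectivity of $\mu$ forces $t_0 \cdots t_{p/2-1} = t_{p/2} \cdots t_{p-1}$, i.e.\ $t_0 \cdots t_{p-1}$ is itself a square prefix, of length $p < 2p$, contradicting minimality. Hence $p$ must be odd.

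The remaining case, $p$ odd, is where I expect the real work to lie. The cases $p = 1$ and $p = 3$ I would check directly: the length-$2$ prefix is $01$, not the square $00$, and the length-$6$ prefix is $011010$, not the square $011011$. For odd $p \ge 5$ the key observation is that an odd period mixes the block parities. For every even $i$ with $0 \le i \le p - 2$, the pair $t_i t_{i+1}$ is a full block, so $t_i \ne t_{i+1}$; but $i + p$ is odd, so the square relations $t_i = t_{i+p}$ and $t_{i+1} = t_{i+p+1}$ turn the straddling pair $t_{i+p} t_{i+p+1}$ into an \emph{unequal} straddling pair, which by the block facts above means $t_j = t_{j+1}$ for $i + p = 2j+1$. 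Taking $i = 0$ and $i = 2$ (both available once $p \ge 5$) yields $t_{(p-1)/2} = t_{(p+1)/2} = t_{(p+3)/2}$, three equal consecutive letters; but $aaa$ is an overlap, contradicting the overlap-freeness of $\mathbf{t}$. This closes the descent and the proof.

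I would flag the odd-$p$ analysis as the main obstacle: the even case is a clean $\mu$-desubstitution, but odd periods do not survive desubstitution intact, so one cannot simply recurse. The trick is not to recurse at all in that case, and instead to convert the period-parity mismatch into a forbidden factor $aaa$ using the explicit description of blocks and straddling pairs. Obtaining \emph{two} consecutive forced equalities—hence three equal letters, rather than just the harmless square $aa$—is exactly why the argument requires $p \ge 5$ and why $p = 3$ (and $p=1$) must be handled by hand.
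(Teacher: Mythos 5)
Your argument is correct, but note that the paper does not prove this lemma at all: it is quoted as a known result, attributed to Shur \cite[Proposition~2.1]{Shur2012}, so there is no in-paper proof to compare against. Your self-contained proof checks out. The even case is the standard desubstitution step: a square prefix of even half-length $p$ aligns with block boundaries, and injectivity of $\mu$ yields a square prefix of half-length $p/2$, contradicting minimality. The odd case is handled correctly as well: for even $i \le p-2$ the block $t_i t_{i+1}$ is unequal, the period copies this inequality onto the straddling pair at the odd position $i+p = 2j+1$, and your characterization of straddling pairs ($t_{2j+1} = t_{2j+2}$ iff $t_j \ne t_{j+1}$) converts each such inequality into $t_j = t_{j+1}$; taking $i = 0$ and $i = 2$ gives three consecutive equal letters. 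Two small remarks. First, the final contradiction does not really need overlap-freeness: no factor $aaa$ can occur in $\mathbf{t}$ for the trivial reason that among any three consecutive positions two of them form a block $t_{2j}t_{2j+1} = \mu(t_j)$, whose letters differ; so your proof can be made entirely elementary, independent of Thue's theorem. Second, the base cases $p=1$ and $p=3$ are genuinely needed (for $p=3$ only $i=0$ is available, which produces just the harmless $t_1 = t_2$), and you correctly identified and dispatched them by inspection of the prefixes $01$ and $011010$.
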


We now prove Theorem~\ref{thm-irr-cube}, which says that there is an irreducible cubefree binary word of length $n$ if and only if $n \in \{10, 14, 18, 19, 20\} \cup \{m \mid m \geq 22\}$.

\begin{proof}[Proof of Theorem~\ref{thm-irr-cube}]
    First, suppose $n \in \{10, 14, 20, 24, 28\}$. In this case we use one of the following words:
    \[
    \begin{tabular}{ c | l }
        10 & 0100101101 \\ 
        14 & 01001011010010 \\  
        20 & 01001010011001010010 \\
        24 & 010010100110010100101101 \\
        28 & 0100101001100101001011010010.
    \end{tabular}
    \]
    
    Now suppose $n \in \{18, 19, 22, 23, 25, 26, 27\} \cup \{m \mid m \geq 29\}$. Consider the following morphisms:
    \begin{align*}
        \varphi_1(0) &= 01100100101101001011010010 \\
        \varphi_1(1) &= 0110101100110101100101001100101001 \\
        \varphi_2(0) &= 01001011010010110100100110 \\
        \varphi_2(1) &= 1001010011001010011010110011010110.
    \end{align*}
    Notice that $\varphi_2(0)$ and $\varphi_2(1)$ are the reversals of $\varphi_1(0)$ and $\varphi_1(1)$, respectively. The images of all cubefree binary words of length 7 under $\varphi_1$ and $\varphi_2$ are cubefree, so by Lemma~\ref{lem-cubefree-morph}, $\varphi_1$ and $\varphi_2$ preserve cubefreeness. Further, $\varphi_i(0)$, $\varphi_i(1)$, $\varphi_i(00)$, $\varphi_i(01)$, $\varphi_i(10)$, and $\varphi_i(11)$ are irreducible cubefree for $i \in \{1, 2\}$, so applying $\varphi_1$ or $\varphi_2$ to a prefix of a cubefree binary word results in an irreducible cubefree binary word. Let $w_1$ and $w_2$ be the images of $\mathbf{t}$ under $\varphi_1$ and $\varphi_2$, respectively, and let $w_{1,k}$ and $w_{2,k}$ be the images of the first $k$ letters of $\mathbf{t}$ under $\varphi_1$ and $\varphi_2$, respectively. Notice that $|w_{1,k}| = |w_{2,k}|$.
    
    Let $k$ be the largest integer $k$ such that $|w_{1,k}| \leq n$. If $n - |w_{1,k}| = 0$, we're done. Otherwise, based on $n - |w_{1,k}|$, we choose a word of the desired length from the following table.
    \begin{longtable}{ c | l }
        1 & $1 w_{1,k}$ \\ 
        2 & $0100101001100101001011001010 w_{1,k-1}$ \\  
        2 & $010010100110010100101101001011010010 w_{1,k-1}$ \\
        3 & $10110100101100101001100101001 w_{1,k-1}$ \\
        3 & $0100101101100100101100101001100101001 w_{1,k-1}$ \\
        4 & $0110 w_{2,k}$ \\
        5 & $01001 w_{1,k}$ \\
        6 & $10010100110010100101001100101001 w_{1,k-1}$ \\
        6 & $0100101100100101101001011001001011010010 w_{1,k-1}$ \\
        7 & $1001010 w_{1,k}$ \\
        8 & $01001010 w_{1,k}$ \\
        9 & $101101001 w_{1,k}$ \\
        10 & $010010100110010100101101001011010010 w_{1,k-1}$ \\
        10 & $01001011001001011010010110100101001100101001 w_{1,k-1}$ \\
        11 & $10010100110 w_{2,k}$ \\
        12 & $101101001010 w_{1,k}$ \\
        13 & $0100101101001 w_{1,k}$ \\
        14 & $01001011001010 w_{1,k}$ \\
        15 & $101101011001101 w_{1,k}$ \\
        16 & $0100101101001010 w_{1,k}$ \\
        17 & $01001010011001010 w_{1,k}$ \\
        18 & $100101001100101001 w_{1,k}$ \\
        19 & $0100101001100101001 w_{1,k}$ \\
        20 & $01001011011001001010 w_{1,k}$ \\
        21 & $100101001010011001010 w_{1,k}$ \\
        22 & $0100101101001011010010 w_{1,k}$ \\
        23 & $10110100101001100101001 w_{1,k}$ \\
        24 & $010010110100101101001010 w_{1,k}$ \\
        25 & $0100101100101001100101001 w_{1,k}$ \\
        26 & $01100100101101001011010010 w_{1,k}$ \\
        27 & $010010110100101001100101001 w_{1,k}$ \\
        28 & $0100101001100101001011001010 w_{1,k}$ \\
        29 & $10110100101100101001100101001 w_{1,k}$ \\
        30 & $100110110100101101001011011001 w_{1,k}$ \\
        31 & $0100101100100101101001011010010 w_{1,k}$ \\
        32 & $10010100110010100101001100101001 w_{1,k}$ \\
        33 & $010010100110010100101001100101001 w_{1,k}$
    \end{longtable}
    
    The irreducibility of these words can be verified by removing the interior letters at the beginning one at a time and finding the cubes in the resulting words. To prove that these words are cubefree, we need the following claim.
    
    \begin{claim}
        No prefix of $w_1$ or $w_2$ is an overlap.
    \end{claim}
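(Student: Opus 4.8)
The plan is to argue by contradiction. Suppose some prefix $P$ of $w_1$ is an overlap. Writing $w_1[i]$ for the $i$-th letter of $w_1$ (indexed from $0$), recall that $P$ is an overlap precisely when it admits a period $p$ with $|P| = 2p+1$; equivalently, $P = xYxYx$ with $p = |xY|$, so that the prefix $xYxY = (xY)^2$ of $P$---and hence of $w_1$---is a square. I would first dispose of short periods by a direct finite computation, checking that for every $p$ below a threshold determined by the block lengths $|\varphi_1(0)| = 26$ and $|\varphi_1(1)| = 34$, the length-$(2p+1)$ prefix of $w_1$ does not have period $p$. The real content is in ruling out overlap prefixes of large period.

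For large $p$ I would exploit that $w_1 = \varphi_1(\mathbf{t})$ is a concatenation of the two blocks $\varphi_1(0)$ and $\varphi_1(1)$. The structural input is a \emph{synchronization} property of this block pair: the two words form a prefix code (they first disagree at index $4$), and a finite check on the blocks shows that no block image can occur in $w_1$ straddling a block boundary. Granting this, periodicity propagates block boundaries: if the length-$(2p+1)$ prefix has period $p$, then the shift by $p$ sends the boundary at position $0$ to a position that must again be a block boundary, forcing $p = |\varphi_1(\mathbf{t}[0\ldots j-1])|$ for some $j$---that is, $p$ is block-aligned.

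Once the period is block-aligned, the defining equality $w_1[0\ldots p-1] = w_1[p\ldots 2p-1]$ reads $\varphi_1(\mathbf{t}[0\ldots j-1]) = \varphi_1(\mathbf{t}[j\ldots 2j-1])$, and since $\varphi_1$ is injective on letters this yields $\mathbf{t}[0\ldots j-1] = \mathbf{t}[j\ldots 2j-1]$. Then $\mathbf{t}[0\ldots 2j-1]$ is a square prefix of $\mathbf{t}$, contradicting Lemma~\ref{lem-tm-sq}. The word $w_2$ is treated by the same scheme; here the argument is if anything easier, since $\varphi_2(0)$ and $\varphi_2(1)$ begin with different letters and so the block pair is trivially a prefix code.

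I expect the main obstacle to be the synchronization step for these \emph{non-uniform} morphisms. Because the blocks have unequal lengths ($26$ and $34$), a period is not forced onto block boundaries for free, and one must genuinely exclude shifts that misalign the block decomposition. I anticipate reducing this to a bounded computation---verifying that the code has small synchronization delay, so that any sufficiently long period lands on a block boundary---after which the pullback to Lemma~\ref{lem-tm-sq} is routine; the short-period cases left over are settled by the finite search mentioned above.
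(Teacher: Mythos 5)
Your proposal is correct and follows essentially the same route as the paper: reduce an overlap prefix to a square prefix, use a finite synchronization check on the blocks of $\varphi_1$ (resp.\ $\varphi_2$) to force the period onto block boundaries, and pull back to a square prefix of $\mathbf{t}$, contradicting Lemma~\ref{lem-tm-sq}. The only cosmetic difference is that the paper implements the synchronization step via a single marker factor (e.g.\ $01100100$ occurring only at the start of $\varphi_1(0)$) rather than a general synchronization-delay argument for the block code.
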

    
    \begin{proof}
        We prove the stronger result that no prefix of $w_1$ or $w_2$ is a square. Suppose $w_1$ has a square prefix $XX$. Then $|X| \geq 8$. But in $w_1$, 01100100 occurs only at the beginning of $\varphi_1(0)$, so $XX$ must be the image of a square prefix of $\mathbf{t}$ under $\varphi_1$, which contradicts Lemma~\ref{lem-tm-sq}. Similarly, suppose $w_2$ has a square prefix $XX$. Then $|X| \geq 9$. But in $w_2$, 010010110 occurs only at the beginning of $\varphi_2(0)$, so we again have a contradiction.
    \end{proof}
    
    Now suppose one of the words in the above table contains a cube $XXX$. Then $XXX$ must start in the prefix (before $w_{1,k}$, $w_{1,k-1}$, or $w_{2,k}$), since $w_1$ and $w_2$ are cubefree. But one can verify that $X$ also can't be entirely within the prefix, so $X = PY$, where $P$ is a suffix of the prefix and $Y$ is a prefix of $w_1$ or $w_2$. Thus, $XXX = PYPYPY$, and $YPYPY$ is a prefix of $w_1$ or $w_2$, which contradicts the claim.
    
    For $n \not\in \{10, 14, 18, 19, 20\} \cup \{m \mid m \geq 22\}$, a computer search shows there are no irreducible cubefree binary words of length $n$.
\end{proof}

\section{Delicate Words} \label{sec-del}

Let $\tau$ be the ternary morphism defined by $\tau(0) = 012$, $\tau(1) = 02$, and $\tau(2) = 1$, and let $\mathbf{v} = \tau^\omega(0)$ be the ternary Thue--Morse word, which is known to be squarefree \cite{Istrail1977}. For Theorem~\ref{thm-del-sq}, we need the following lemma.

\begin{lemma} \label{lem-2v}
    $2\mathbf{v}$ is squarefree.
\end{lemma}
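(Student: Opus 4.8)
The plan is to reduce the squarefreeness of $2\mathbf{v}$ to a statement about prefixes of $\mathbf{v}$ alone, and then to prove that statement by a descent through the morphism $\tau$. Writing $\mathbf{v} = v_1 v_2 \cdots$, since $\mathbf{v}$ is squarefree, any square factor of $2\mathbf{v}$ must involve the prepended letter $2$ and hence must be a prefix. Comparing the two halves of a square prefix $XX$ of length $2k$, the leading letters give $v_k = 2$ and the remaining letters give $v_{k+i} = v_i$ for $1 \le i \le k-1$; thus $XX$ is a square prefix of $2\mathbf{v}$ exactly when $\mathbf{v}$ has a prefix of the form $W2W$, where $W = v_1 \cdots v_{k-1}$. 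So it suffices to show that $\mathbf{v}$ has no prefix of the form $W2W$ with $W$ nonempty. For the descent to close up, I would actually prove the slightly more general statement that $\mathbf{v}$ has no prefix of the form $WcW$ with $c$ a single letter and $W$ nonempty.

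To prove this, I would take a counterexample $WcW$ with $m = |W|$ minimal. The key observation is that the first letter of the second copy of $W$ equals the first letter of $\mathbf{v}$, namely $0$, and that in $\mathbf{v}$ every $0$ begins a $\tau$-block (since $0$ occurs only as the initial letter of $\tau(0) = 012$ and $\tau(1) = 02$). Hence there is a $\tau$-block boundary between positions $m+1$ and $m+2$, and trivially one before position $1$, so the prefix $Wc$ is a whole number of $\tau$-blocks: $Wc = \tau(v_1 \cdots v_p)$ for some $p$.

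The descent then uses recognizability. Because $\{\tau(0), \tau(1), \tau(2)\} = \{012, 02, 1\}$ is a prefix code, the block decomposition of $\mathbf{v}$ is unique, so reading the second copy of $W$ starting from the block boundary at position $m+2$ forces its whole-block part $\tau(v_1 \cdots v_{p-1})$ to be parsed blockwise; injectivity of $\tau$ then yields $v_{p+i} = v_i$ for $1 \le i \le p-1$. Consequently the length-$(2p-1)$ prefix of $\mathbf{v}$ equals $U \, v_p \, U$ with $U = v_1 \cdots v_{p-1}$, which is a strictly shorter counterexample: since $v_1 = 0$ forces the first block to have length $3$, one gets $|U| = p - 1 < m$. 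This contradicts minimality as soon as $m \ge 3$, and the cases $m \in \{1, 2\}$ are dispatched by noting that the prefixes $012$ and $01202$ of $\mathbf{v}$ are not of the form $WcW$.

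The hard part will be the desubstitution step: pinning down the block boundaries (where the special role of the letter $0$ is essential) and invoking the prefix-code property to transfer an equality of factors of $\mathbf{v}$ back to an equality in its $\tau$-preimage, while keeping careful track of lengths so that the descent is strict. The short tail of $W$ after $\tau(v_1 \cdots v_{p-1})$ depends on whether $v_p$ is $0$, $1$, or $2$ (it is $01$, $0$, or empty, respectively), and checking that each case yields a genuine shorter counterexample, together with verifying the two base cases, is routine but must be done with care.
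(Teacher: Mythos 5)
Your proof is correct, but it takes a genuinely different route from the paper's. The paper's proof is three lines: it invokes Berstel's characterization of $\mathbf{v}$ as the sequence counting the $0$s between consecutive $1$s of $\mathbf{t}$, observes that $2\mathbf{v}$ is the corresponding count sequence for $10\mathbf{t}$, and cites the overlap-freeness of $10\mathbf{t}$ (Allouche--Currie--Shallit), since a square in the count sequence would yield an overlap $1Y1Y1$ in $10\mathbf{t}$. You instead work directly with the generating morphism $\tau$. Your reduction of a square prefix of $2\mathbf{v}$ to a prefix of $\mathbf{v}$ of the form $W2W$ is right, the strengthening to ``no prefix of the form $WcW$'' is exactly what is needed for the descent to close up, and the desubstitution step is sound: $0$ occurs only block-initially in $\{012,02,1\}$, that set is a prefix code (so blocks peel off one at a time), and your length bookkeeping ($m+1 = |\tau(v_1\cdots v_p)| \geq p+2$ because the first block is $012$) makes the descent strict and guarantees $p \geq 2$ once $m \geq 3$, with $m \in \{1,2\}$ checked against the prefixes $012$ and $01202$. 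What each approach buys: the paper's is short but leans on two external results; yours is self-contained apart from the squarefreeness of $\mathbf{v}$ itself, and the intermediate statement is stronger --- it is the analogue for $\mathbf{v}$ of Lemma~\ref{lem-tm-sq} and immediately gives that $1\mathbf{v}$ is squarefree as well. One small point to patch in the write-up: the case where $W$ is empty in the initial reduction (the would-be square prefix $22$, i.e.\ $v_1 = 2$) should be dismissed explicitly, though it is immediate since $v_1 = 0$.
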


\begin{proof}
    Berstel \cite{Berstel1978} proved that $\mathbf{v}$ is equivalently characterized by letting the $i$-th letter be the number of 0s between the $i$-th and $(i+1)$-th 1s in $\mathbf{t}$. Thus, $2\mathbf{v}$ is defined by letting the $i$-th letter be the number of 0s between the $i$-th and $(i+1)$-th 1s in $10\mathbf{t}$. Since $10\mathbf{t}$ is overlap-free by a result of \cite{Allouche1998}, $2\mathbf{v}$ is squarefree.
\end{proof}

We also need the following lemma due to Crochemore \cite[Corollary~5]{Crochemore1982}.

\begin{lemma} \label{lem-squarefree-morph}
    A ternary morphism preserves squarefreeness if and only if the images of all squarefree ternary words of length 5 are squarefree.
\end{lemma}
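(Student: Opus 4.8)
The forward implication is trivial: a squarefree ternary word of length $5$ is in particular squarefree, so if $h$ preserves squarefreeness then its action on length-$5$ words certainly produces squarefree images. The content is the converse, which I would establish by contraposition through a minimal-counterexample argument. Suppose the images of all squarefree ternary words of length $5$ are squarefree, yet $h$ fails to preserve squarefreeness, and let $w = a_1 a_2 \cdots a_n$ be a shortest squarefree word whose image $h(w)$ contains a square $uu$. By the hypothesis together with minimality, $n \geq 6$. Furthermore, the square must begin within $h(a_1)$ and end within $h(a_n)$: otherwise $uu$ would lie inside the image of a proper factor $a_2 \cdots a_n$ or $a_1 \cdots a_{n-1}$ of $w$, which is squarefree and strictly shorter, contradicting minimality.

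The core of the proof is to understand how the two occurrences of $u$ sit relative to the block factorization $h(w) = h(a_1) h(a_2) \cdots h(a_n)$. Since the two copies of $u$ are shifted by exactly $|u|$, I would compare the sequence of block boundaries falling inside the first copy with those inside the second. Using a synchronization property of $h$---that the location of a block boundary is forced by a bounded window of neighboring image letters---one argues that the two boundary sequences must be compatible, which matches up the blocks of the first copy of $u$ with blocks of the second. Propagating this matching yields $a_i = a_{i+p}$ across a long stretch for some fixed period $p$, hence a genuine square factor in $w$ itself, contradicting the squarefreeness of $w$. The only escape is that $|u|$ is small enough that $uu$ lives inside the image of a short factor of $w$, but that case is already excluded by the hypothesis.

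The main obstacle, and where the constant $5$ enters, is quantifying this synchronization window precisely: I must show that any square genuinely \emph{created} by the morphism, rather than inherited from a square already present in $w$, is witnessed inside the image of some factor of $w$ of length at most $5$. This reduces to bounding, in terms of the block lengths and of how far distinct images $h(a)$ and $h(b)$ can share a common prefix or suffix, how long the alignment of $uu$ can propagate before it either forces a repetition in $w$ or collapses into a short local square. Pinning down these bounds over a three-letter alphabet, and dispatching the boundary cases where $uu$ straddles only a few blocks, is the delicate part; once the window is shown to have length at most $5$, the infinitely many potential counterexamples collapse to the finite check supplied by the hypothesis, completing the proof.
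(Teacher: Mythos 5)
First, a point of comparison: the paper does not prove this lemma at all. It is quoted as Corollary~5 of Crochemore \cite{Crochemore1982}, so there is no in-paper argument to measure your proposal against; you are effectively being asked to reprove a nontrivial result from the literature.

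Judged on its own terms, your proposal correctly handles the trivial direction and correctly identifies the standard strategy for the converse: take a minimal squarefree counterexample $w$, note the square $uu$ must straddle all of $h(w)$, and use synchronization of the block factorization to push the repetition back into $w$. That is indeed the shape of Crochemore's argument. But as written the proposal is an outline with the two load-bearing steps deferred, so it has a genuine gap. (1) The ``synchronization property'' is asserted rather than derived; it must itself be extracted from the hypothesis that images of short squarefree words are squarefree (for instance, that $h$ is injective on letters, that no image $h(a)$ occurs inside $h(b)h(c)$ except aligned with a block boundary, and that common prefixes and suffixes of distinct images are suitably controlled). None of this is automatic for an arbitrary ternary morphism, and establishing it is a substantial part of Crochemore's proof. (2) The constant $5$ is never actually obtained, and this is not a routine bookkeeping step: Crochemore's general test-set bound for arbitrary alphabets depends on the lengths of the images of the letters, and the fact that a \emph{uniform} constant works at all is special to three-letter alphabets (no analogous uniform constant exists for alphabets of size four or more). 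So the ternary hypothesis must enter the argument in an essential way, and your sketch gives no indication of where or how. Since you explicitly label the entire quantitative part ``the delicate part'' still to be carried out, the proposal in its current form reduces the lemma to a restatement of what needs to be proved.
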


We now prove Theorem~\ref{thm-del-sq}, which says that there is a delicate squarefree ternary word of length $n$ if and only if $n \in \{5\} \cup \{m \mid m \geq 7\}$.

\begin{proof}[Proof of Theorem~\ref{thm-del-sq}]
    Consider the following morphism:
    \begin{align*}
        \varphi(0) &= 01202120102 \\
        \varphi(1) &= 01210201021 \\
        \varphi(2) &= 01210212021.
    \end{align*}
    
    The images of all squarefree ternary words of length 5 under $\varphi$ are squarefree, so by Lemma~\ref{lem-squarefree-morph}, $\varphi$ preserves squarefreeness. Further, $\varphi(a)$ is delicate squarefree for $a \in \{0, 1, 2\}$, so applying $\varphi$ to a prefix of a squarefree ternary word results in a delicate squarefree ternary word. Let $w$ be the image of $\mathbf{v}$ under $\varphi$, and let $w_k$ be the image of the first $k$ letters of $\mathbf{v}$ under $\varphi$.
    
    Let $k$ be the largest integer $k$ such that $|w_k| \leq n$. If $n - |w_k| = 0$, we're done. Otherwise, based on $n - |w_k|$, we choose a word of the desired length from the following table.
    \[
    \begin{tabular}{ c | l }
        1 & $010210120102 w_{k-1}$ \\
        2 & $02 w_k$ \\
        3 & $102 w_k$ \\
        4 & $0121 w_k$ \\
        5 & $12021 w_k$ \\
        6 & $012102 w_k$ \\
        7 & $0212021 w_k$ \\
        8 & $02120121 w_k$ \\
        9 & $021012102 w_k$ \\
        10 & $1202120121 w_k$
    \end{tabular}
    \]
    
    The delicacy of these words can be verified by changing the letters at the beginning one at a time and finding the squares in the resulting words. We prove that these words are squarefree with the following claims.
    
    \begin{claim}
        $010210120102 w$ and $021012102 w$ are squarefree.
    \end{claim}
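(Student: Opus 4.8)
The plan is to rule out squares straddling the junction between the prefix and $w$. Throughout, write $P$ for whichever of $010210120102$ and $021012102$ is under consideration, and set $\ell = |P|$, so that $\ell \le 12$. Since $\varphi$ preserves squarefreeness (Lemma~\ref{lem-squarefree-morph}) and $\mathbf{v}$ is squarefree, the word $w = \varphi(\mathbf{v})$ is squarefree; a direct check shows $P$ is squarefree as well. Consequently any square $uu$ occurring in $Pw$ must begin at some position $s < \ell$ and end inside $w$. I would split into cases according to the position of the center $c = s + |u|$ relative to the boundary $\ell$.

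If $c \le \ell$ then the left half $u$ lies entirely in $P$, so $|u| = c - s \le \ell \le 12$ is bounded. In the borderline case $c = \ell$ the two halves are a suffix of $P$ and an equal-length prefix of $w$, so it is enough to check the finitely many suffixes of $P$ against the beginning of $w$; and when $c < \ell$ the whole square occupies only the first few dozen letters of $Pw$, a finite inspection. The substantive case is $c > \ell$, in which the right half $u$ lies entirely in $w$ while the left half straddles the boundary. Here the periodicity of $uu$ forces the prefix of $w$ of length $2|u| - d$ to have period $|u|$, where $d = \ell - s$ satisfies $1 \le d \le 12$. Thus the whole problem reduces to excluding prefixes of $w$ that are squares with at most $12$ letters missing.

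To handle these near-square prefixes I would use that $\varphi$ is $11$-uniform and injective, together with the observation that $0120$ occurs in $w$ only at the start of an occurrence of $\varphi(0)$: none of $\varphi(0), \varphi(1), \varphi(2)$ contains $0120$ internally, and since every image begins with $012$, no occurrence of $0120$ can span a block boundary. Now if $|u| \ge 16$, then the deficiency bound $d \le 12$ guarantees that $0120$ reappears at position $|u|$, forcing $11 \mid |u|$; writing $|u| = 11k$ and using injectivity of $\varphi$, the near-square prefix of $w$ descends to a prefix of $\mathbf{v}$ of period $k$ in which at least the first $k-2$ letters coincide with the corresponding $k-2$ letters beginning at position $k$, i.e. a square prefix of $\mathbf{v}$ with at most two letters missing. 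The finitely many cases $|u| < 16$, together with the two aligned values $|u| \in \{22, 33\}$ (where $k \le 3$), are settled by inspecting a bounded prefix of $Pw$.

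The main obstacle is the final step: showing that $\mathbf{v}$ admits no such low-deficiency near-square prefix for $k \ge 4$. This does not follow from squarefreeness alone, since squarefree words may contain arbitrarily long near-squares; the exclusion instead comes from the finer arithmetic of $\mathbf{v}$. Concretely, I would invoke Lemma~\ref{lem-2v}: a deficiency-one near-square prefix of $\mathbf{v}$ with a $2$ in the last position of its first period yields a genuine square prefix of $2\mathbf{v}$, which is forbidden. The remaining sub-cases---distinguished by the letter of $\mathbf{v}$ at the junction and, when $d = 12$, by the second missing letter---are pinned down by combining the squarefreeness of $\mathbf{v}$ with the explicit value of the relevant suffix of $P$, which constrains which block of $\mathbf{v}$ may sit at the junction. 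I expect this verification that the arithmetic of $\mathbf{v}$ genuinely forbids the near-squares, rather than the boundary bookkeeping, to be the heart of the argument.
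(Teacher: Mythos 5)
Your reduction --- from squares straddling the junction, to period-$|u|$ prefixes of $w$ of length $2|u|-d$, to (via the synchronizing factor $0120$ and the $11$-uniformity of $\varphi$) near-square prefixes of $\mathbf{v}$ with deficiency at most two --- is sound as far as it goes, but it stops exactly where the real content begins, and you say so yourself: the exclusion of low-deficiency near-square prefixes of $\mathbf{v}$ for $k\ge 4$ is flagged as ``the heart of the argument'' and is not carried out. As you note, this does not follow from squarefreeness of $\mathbf{v}$ alone, and the one concrete tool you name, Lemma~\ref{lem-2v}, does not apply to the case that actually occurs: periodicity forces the last $d$ letters of the prefix $P$ to coincide with the last $d$ letters of the $k$-th $\varphi$-block of $w$, and since both prefixes end in $2$ while only $\varphi(0)$ (not $\varphi(1)$ or $\varphi(2)$) ends in $2$, the block at the junction is forced to be $\varphi(0)$, i.e.\ $\mathbf{v}_k=0$ --- whereas Lemma~\ref{lem-2v} handles only the sub-case $\mathbf{v}_k=2$. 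The argument can in fact be closed (comparing suffixes of $P$ with the suffix $20102$ of $\varphi(0)$ forces $d\le 6$, resp.\ $d\le 3$, whence block $2k$ agrees with block $k$ through its fourth letter, so $\mathbf{v}_{2k}=0=\mathbf{v}_k$ and $\mathbf{v}$ has a genuine square prefix, a contradiction), but none of this is in your write-up, so as it stands the proposal is a reduction to an unproved statement rather than a proof.

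The paper avoids this entire apparatus with one observation: the junction itself creates the factor $20120$ (the final $2$ of either prefix followed by the initial $0120$ of $\varphi(0)$), and $20120$ occurs nowhere in $w$ --- neither inside any $\varphi$-block nor across block boundaries. Hence any square $XX$ starting in the prefix and long enough to cover the junction has $X$ containing $20120$, so the second copy of $X$ would place $20120$ inside $w$; the finitely many short squares are eliminated by inspection. Your marker $0120$ recurs throughout $w$, which is what forces the descent to $\mathbf{v}$ and the unproved near-square analysis; choosing a marker absent from $w$ altogether makes all of that unnecessary.
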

    
    \begin{proof}
        Suppose one of these words contains a square $XX$. Then $XX$ must start in the prefix (before $w$), since $w$ is squarefree. Further, one can verify that $X$ must contain the factor 20120, which is a contradiction, since $w$ does not contain this factor.
    \end{proof}
    
    \begin{claim}
        $0212021 w$ is squarefree.
    \end{claim}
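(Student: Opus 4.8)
The plan is to follow the same template as the previous claim. First I would note that the prefix $0212021$ is itself squarefree (a direct check: it contains no factor $aa$, and none of its length-$4$ or length-$6$ factors is a square), and that $w$ is squarefree because $\varphi$ preserves squarefreeness and $\mathbf{v}$ is squarefree. Hence, if $0212021 w$ contained a square $XX$, the occurrence could lie neither entirely in the prefix nor entirely in $w$, so $XX$ must begin at one of the seven positions inside the prefix and extend into $w$. Since $\mathbf{v}$ begins with $0$, the word $w$ begins with $\varphi(0) = 01202120102$, so I can write down an explicit initial segment $0212021\,01202120102\cdots$ of $0212021 w$ and reason about where the two copies of $X$ sit.

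Next I would split on the length of $X$. For $|X|$ small enough that $XX$ fits inside the explicit initial segment, I would simply check by hand that none of these short candidates is a square. For larger $|X|$, the second copy of $X$ lies well inside $w$, so it suffices to exhibit a short factor that the prefix forces to appear inside $X$ but that never occurs in $w$; this is exactly the mechanism used for $010210120102 w$ and $021012102 w$, where the forbidden factor was $20120$. I would identify the analogous forbidden factor for the prefix $0212021$ by examining the finitely many ways a long $X$ can straddle the boundary, aligning the repeated block against the known first letters of $w$.

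The main obstacle is verifying that the chosen factor genuinely does not occur in $w$. Because $w = \varphi(\mathbf{v})$ and $\mathbf{v}$ is squarefree, every short factor of $w$ arises inside the image of a factor of $\mathbf{v}$ of bounded length, and since $\mathbf{v}$ has no factor $aa$, only finitely many blocks $\varphi(a)\varphi(b)$ together with their internal overlaps need to be inspected. Enumerating the $5$-grams occurring in these blocks and confirming that the candidate factor is absent completes the contradiction and hence the proof; this routine enumeration is the step I would expect to require the most care.
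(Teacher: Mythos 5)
Your plan transplants the forbidden-factor mechanism from the previous claim, but for this particular prefix that mechanism provably cannot be completed, so there is a genuine gap at the step you flag as ``the most care.'' The obstruction is that $0212021$ is exactly the length-$7$ suffix of $\varphi(2)=01210212021$, and since $20$ is a factor of $\mathbf{v}$ (e.g.\ $\mathbf{v}=01\underline{20}21\ldots$), the block $\varphi(2)\varphi(0)$ occurs inside $w$; its suffix of length $18$ is precisely $0212021\,\varphi(0)$, the first $18$ letters of $0212021w$. This goes further: $\mathbf{v}$ contains $20120210$, i.e.\ the letter $2$ followed by the length-$7$ prefix of $\mathbf{v}$, so $0212021$ followed by the first $77$ letters of $w$ is again a factor of $w$. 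Hence every bounded-length factor of $0212021w$ that meets the boundary (the only place a forbidden factor could live, since anything wholly inside $w$ is trivially a factor of $w$) is itself a factor of $w$. No analogue of $20120$ exists, and your case ``for larger $|X|$'' has no argument; the small-$|X|$ hand check alone does not cover the intermediate and large lengths.

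The paper's proof exploits the same coincidence that defeats your approach: since $0212021$ is a suffix of $\varphi(2)$, the word $0212021w$ is a suffix of $\varphi(2)w=\varphi(2\mathbf{v})$, and $\varphi(2\mathbf{v})$ is squarefree because $2\mathbf{v}$ is squarefree (Lemma~\ref{lem-2v}) and $\varphi$ preserves squarefreeness (Lemma~\ref{lem-squarefree-morph}). A factor of a squarefree word is squarefree, and the claim follows in one line. To repair your write-up, replace the forbidden-factor step with this observation.
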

    
    \begin{proof}
        This follows from Lemma~\ref{lem-2v}.
    \end{proof}
    
    \begin{claim}
        $1202120121 w$ is squarefree.
    \end{claim}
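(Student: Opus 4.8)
The goal is to show that $1202120121\,w$ is squarefree, where $w = \varphi(\mathbf{v})$. The plan is to mimic the structure of the first claim in this sequence, adapting the argument to account for the specific prefix $1202120121$. Suppose for contradiction that $1202120121\,w$ contains a square $XX$. Since $w$ itself is squarefree (being the image of the squarefree word $\mathbf{v}$ under a squarefreeness-preserving morphism), the occurrence of $XX$ cannot lie entirely inside $w$, so the square must begin somewhere in the prefix $1202120121$ (before the start of $w$).

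The key step is to pin down the structure of $X$. Because the prefix has length only $10$, any square $XX$ starting in the prefix and extending into $w$ forces $|X|$ to be reasonably large, and $X$ must straddle the boundary between the prefix and $w$; write $X = PY$ where $P$ is a (possibly empty) suffix of $1202120121$ and $Y$ is a prefix of $w$, exactly as in the cubefree boundary argument earlier in the paper. The heart of the verification is then a finite case analysis over the possible starting positions within the ten-letter prefix: for each candidate starting index, I would check the short squares directly and, for the longer ones, identify a short forbidden factor that $X$ must contain. Concretely, as in the earlier claim I expect to show that any such $X$ is forced to contain a factor that does not appear anywhere in $w$ (the earlier claim used the factor $20120$; here one would locate the analogous marker factor by inspecting $\varphi(0)=01202120102$, $\varphi(1)=01210201021$, and $\varphi(2)=01210212021$ together with the admissible two-letter factors of $\mathbf{v}$). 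This yields the contradiction and completes the proof.

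The main obstacle is the case analysis being genuinely finite but not entirely uniform: the prefix $1202120121$ is not itself of the form $\varphi(a)$, so I cannot simply invoke the morphism-image structure for the whole word, and I must handle by hand the short squares wholly inside or just beyond the prefix before the clean ``forbidden factor'' argument kicks in for the large squares. In particular, I would verify directly that no square of small period begins in the prefix, and that the letters of the prefix are consistent with a genuine continuation into $w$ only in ways that cannot close up a square. Once the boundary-straddling case is reduced to the forbidden-factor contradiction, the remaining checks are routine finite computations of the same flavor as those already carried out for $010210120102\,w$ and $021012102\,w$ in the first claim.
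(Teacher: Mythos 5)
There is a genuine gap. Your plan is to reduce everything to a finite case analysis plus a ``short forbidden factor'' argument, as in the claim for $010210120102\,w$. That works for squares $XX$ that begin early enough in the prefix: the factor $12101$ occurs across the boundary (last three letters $121$ of the prefix followed by $01$ from $w$) but nowhere in $w$, which is exactly how the paper confines any square to the suffix $21\,w$. The problem is the squares that begin at the last two letters of the prefix, i.e.\ squares of $21\,w$ starting at position $1$ or $2$. There $X = 21Y$ (or $1Y$) with $Y$ a prefix of $w$, and no forbidden-factor argument can apply: since $\varphi(1)$ and $\varphi(2)$ both end in $021$ and $\mathbf{v}$ contains $101$, $1012$, etc., long prefixes of $21\,w$ such as $21\,\varphi(0)\varphi(1)\varphi(2)$ genuinely occur as factors of $w$. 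Moreover $|X|$ is unbounded in these cases (the second copy of $X$ may sit arbitrarily deep inside $w$), so the case analysis is not finite and no marker factor exists to be found. (A side issue: for squares the boundary decomposition gives $XX = PYPY$ with $YPY$ a prefix of $w$, which, unlike the cube case $YPYPY$, does not by itself produce a square prefix of $w$, so that device does not close these cases either.)

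The paper's proof supplies precisely the missing ingredient: after the $12101$ reduction, it observes that $21\,w$ is a suffix of $\varphi(2)\,w = \varphi(2\mathbf{v})$, which is squarefree because $\varphi$ preserves squarefreeness and $2\mathbf{v}$ is squarefree (Lemma~\ref{lem-2v}). That lemma is not a finite computation --- it rests on Berstel's characterization of $\mathbf{v}$ via the gaps between $1$s in $\mathbf{t}$ and the overlap-freeness of $10\mathbf{t}$ --- and it is the only tool in the paper that rules out the unboundedly long squares your proposal cannot reach. To repair your argument you would need to import Lemma~\ref{lem-2v} (or some equivalent global fact about extending $\mathbf{v}$ on the left) for the position-$9$ and position-$10$ cases.
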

    
    \begin{proof}
        Suppose this word contains a square $XX$. Since $w$ does not contain 12101 as a factor, $XX$ is contained in $21 w$. But $21 w$ is squarefree by Lemma~\ref{lem-2v}, which is a contradiction.
    \end{proof}
    
    For $n \not\in \{5\} \cup \{m \mid m \geq 7\}$, a computer search shows there are no delicate squarefree ternary words of length $n$.
\end{proof}

We now prove Theorem~\ref{thm-del-over}, which says that there is a delicate overlap-free binary word of length $n$ if and only if $n \in \{m \mid m \geq 7\}$.\footnote{Jeffrey Shallit has pointed out that this theorem can also be proven using the automated theorem prover Walnut.}

\begin{proof}[Proof of Theorem~\ref{thm-del-over}]
    If $n = 9$, we use the word 001011001. Now suppose $n \in \{7, 8\} \cup \{m \mid m \geq 10\}$. We use a construction for each residue class modulo 8. Based on the residue class, we use a prefix of $\mathbf{t}$ after removing the first $k$ letters, where $k$ is given by the following table.
    \[
    \begin{tabular}{ c | l }
        0 & $k = 0$ \\
        1 & $k = 7$ \\
        2 & $k = 6$ \\
        3 & $k = 13$ \\
        4 & $k = 12$ \\
        5 & $k = 3$ \\
        6 & $k = 10$ \\
        7 & $k = 1$
    \end{tabular}
    \]
    
    As factors of $\mathbf{t}$, these words are overlap-free. Their delicacy is shown by induction. The base cases can be verified. The induction hypothesis is simple because both $T_0$ and $T_1$ are delicate overlap-free.
    
    For $n \not\in \{m \mid m \geq 7\}$, a computer search shows there are no delicate overlap-free binary words of length $n$.
\end{proof}

We now prove Theorem~\ref{thm-del-cube}, which says that there is a delicate cubefree binary word of length $n$ if and only if $n \in \{20, 21, 22, 29, 33, 34, 35\} \cup \{m \mid m \geq 38\}$.

\begin{proof}[Proof of Theorem~\ref{thm-del-cube}]
    First, suppose $n \in \{20, 33\}$. In this case we use one of the following words:
    \[
    \begin{tabular}{ c | l }
        20 & 00101001101001101011 \\
        33 & 001010011010011010110010110010100.
    \end{tabular}
    \]
    
    Now suppose $n \in \{21, 22, 29, 34, 35\} \cup \{m \mid m \geq 38\}$. Consider the following morphism:
    \begin{align*}
        \varphi(0) &= 0110101100101100101001 \\
        \varphi(1) &= 1001010011010011010110.
    \end{align*}
    
    The images of all cubefree binary words of length 7 under $\varphi$ are cubefree, so by Lemma~\ref{lem-cubefree-morph}, $\varphi$ preserves cubefreeness. Further, both $\varphi(0)$ and $\varphi(1)$ are delicate cubefree, so applying $\varphi$ to a prefix of a cubefree binary word results in a delicate cubefree binary word. Let $w$ be the image of $\mathbf{t}$ under $\varphi$, and let $w_k$ be the image of the first $k$ letters of $\mathbf{t}$ under $\varphi$.
    
    Let $k$ be the largest integer $k$ such that $|w_k| \leq n$. If $n - |w_k| = 0$, we're done. Otherwise, based on $n - |w_k|$, we choose a word of the desired length from the following table.
    \[
    \begin{tabular}{ c | l }
        1 & $00101001101001101011001 w_{k-1}$ \\
        2 & $011001001100110110011001 w_{k-1}$ \\
        3 & $0010100110100110101101001 w_{k-1}$ \\
        4 & $00101001101001101011001010 w_{k-1}$ \\
        5 & $001010011010011010110010110 w_{k-1}$ \\
        6 & $0010100110100110101100101001 w_{k-1}$ \\
        7 & $01100100110011011001100100110 w_{k-1}$ \\
        8 & $001010011010011010110100101001 w_{k-1}$ \\
        9 & $0010100110100110101100101001010 w_{k-1}$ \\
        10 & $00101001101001101011001010011010 w_{k-1}$ \\
        11 & $001010011010011010110010110011001 w_{k-1}$ \\
        12 & $001010011010 w_k$ \\
        13 & $1001010011010 w_k$ \\
        14 & $001010011010011010110010100101001101 w_{k-1}$ \\
        15 & $0010100110100110101100100110011011001 w_{k-1}$ \\
        16 & $01100100110011011001100100110011011001 w_{k-1}$ \\
        17 & $00101001101001101 w_k$ \\
        18 & $100101001101001101 w_k$ \\
        19 & $1101011001011001010 w_k$ \\
        20 & $01101011001011001010 w_k$ \\
        21 & $001010011010011010110 w_k$
    \end{tabular}
    \]
    
    The delicacy of these words can be verified by changing the letters at the beginning one at a time and finding the cubes in the resulting words. To prove that these words are cubefree, we need the following claim.
    
    \begin{claim}
        No prefix of $w$ is an overlap.
    \end{claim}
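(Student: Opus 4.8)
The plan is to follow the strategy used for the analogous claim in the proof of Theorem~\ref{thm-irr-cube}: I would prove the stronger statement that no prefix of $w$ is a \emph{square}. This suffices, because if some prefix of $w$ were an overlap $xYxYx$, then its own prefix $xYxY$ would be a square prefix of $w$. Proving the square version is both cleaner and lets me invoke Lemma~\ref{lem-tm-sq} directly.

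So suppose, for contradiction, that $w$ has a square prefix $XX$. First I would bound $|X|$ from below by checking directly that no short prefix of $w$ is a square; since $w$ begins with $\varphi(0)$ this is a short finite check that forces $|X|$ to exceed the length of the synchronizing factor introduced next. The heart of the argument is a synchronization property of the uniform morphism $\varphi$, whose block length is $22$: I would exhibit a short prefix $p$ of $\varphi(0)$ (i.e.\ a short prefix of $w$ itself) and verify that $p$ occurs in $w$ only at positions divisible by $22$, that is, only at the start of a $\varphi(0)$-block. Granting this, both occurrences of $X$, at position $0$ and at position $|X|$, begin with $p$, so $|X|$ must be a multiple of $22$. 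Writing $|X| = 22\ell$, the prefix $XX = w_{2\ell}$ is the $\varphi$-image of the length-$2\ell$ prefix $u$ of $\mathbf{t}$; since $\varphi$ is injective and $X$ is aligned to block boundaries, $X = \varphi(v)$ where $u = vv$, so $\mathbf{t}$ has the square prefix $vv$. This contradicts Lemma~\ref{lem-tm-sq}.

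The step I expect to be the main obstacle is verifying the synchronization property, namely that the chosen prefix $p$ of $\varphi(0)$ appears in $w$ only at block boundaries. Concretely, this reduces to checking that $p$ is not a factor starting at any interior position of $\varphi(a)\varphi(b)$ for $a, b \in \{0,1\}$, a finite computation over the four two-block words $\varphi(0)\varphi(0)$, $\varphi(0)\varphi(1)$, $\varphi(1)\varphi(0)$, and $\varphi(1)\varphi(1)$. Once the alignment is pinned down in this way, the reduction to Lemma~\ref{lem-tm-sq} is routine, exactly as in the cubefree case treated earlier in Theorem~\ref{thm-irr-cube}.
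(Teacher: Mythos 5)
Your proposal matches the paper's argument: the paper likewise proves the stronger statement that no prefix of $w$ is a square, notes $|X| \geq 12$, and uses the synchronizing prefix $011010110010$ of $\varphi(0)$ (which occurs in $w$ only at the start of a $\varphi(0)$-block) to force $XX$ to be the $\varphi$-image of a square prefix of $\mathbf{t}$, contradicting Lemma~\ref{lem-tm-sq}. Your write-up just makes the alignment and injectivity steps more explicit.
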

    
    \begin{proof}
        We prove the stronger result that no prefix of $w$ is a square. Suppose $w$ has a square prefix $XX$. Then $|X| \geq 12$. But in $w$, 011010110010 occurs only at the beginning of $\varphi(0)$, so $XX$ must be the image of a square prefix of $\mathbf{t}$ under $\varphi$, which contradicts Lemma~\ref{lem-tm-sq}.
    \end{proof}
    
    Now suppose one of the words in the above table contains a cube $XXX$. Then $XXX$ must start in the prefix (before $w_k$ or $w_{k-1}$), since $w$ is cubefree. But one can verify that $X$ also can't be entirely within the prefix, so $X = PY$, where $P$ is a suffix of the prefix and $Y$ is a prefix of $w$. Thus, $XXX = PYPYPY$, and $YPYPY$ is a prefix of $w$, which contradicts the claim.
    
    For $n \not\in \{20, 21, 22, 29, 33, 34, 35\} \cup \{m \mid m \geq 38\}$, a computer search shows there are no delicate cubefree binary words of length $n$.
\end{proof}

\section{Extremal, Irreducible, and Delicate Overlap-free Binary Words} \label{sec-ex-irr-del}

For Theorem~\ref{thm-ex-irr-del}, we need the following lemma due to Mol, Rampersad, and Shallit \cite[Lemma~6]{MolRampersadShallit}.

\begin{lemma}
    If $w = w^\prime w^{\prime\prime}$ is an overlap-free binary word with $|w^\prime|, |w^{\prime\prime}| \geq 5$, then every extension $w^\prime a w^{\prime\prime}$ for $a \in \{0, 1\}$ contains an overlap.
\end{lemma}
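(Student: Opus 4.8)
The plan is to argue by contradiction: assume that for some choice of $a \in \{0,1\}$ the word $x = w'aw''$ is overlap-free, and derive a contradiction from the rigid Thue--Morse structure of overlap-free binary words. Both $w = w'w''$ and $x$ are then overlap-free, and they are related by the insertion of a single letter at the interior position $|w'|+1$.

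The main tool I would invoke is the classical factorization theorem for overlap-free binary words: every such word $z$ factors uniquely as $z = p\,\mu(y)\,s$ with $p, s \in \{\varepsilon, 0, 1, 00, 11\}$ and $y$ overlap-free, and inside the \emph{core} $\mu(y)$ the block boundaries are synchronized, so that every interior occurrence of $00$ or $11$ straddles a boundary between two $\mu$-blocks. Consequently each overlap-free word has a well-defined \emph{phase} (the parity of $|p|$), and an overlap-free factor containing a core block can sit inside a longer overlap-free word only at absolute positions of one fixed parity. First I would record the elementary fact that forces the constant $5$: a binary word avoiding both $00$ and $11$ is alternating, and $01010$, $10101$ are overlaps, so every overlap-free word of length at least $5$ contains $00$ or $11$; since $|p|+|s| \leq 4$, this means both $w'$ and $w''$ have nonempty cores.

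The heart of the argument is then a parity computation. Using that $w'$ is a common prefix of $w$ and $x$ of length at least $5$, I would show that the unique factorizations of $w$ and $x$ must agree on $w'$, so $w$ and $x$ share a common phase $\phi$. On the other hand, $w''$ is a common suffix, but it begins at absolute position $|w'|+1$ in $w$ and at $|w'|+2$ in $x$. Since the internal core structure of $w''$ forces its block boundaries to land at a fixed parity relative to the host grid, embedding $w''$ at two absolute positions differing by one would require the two host phases to differ by one, contradicting $\phi_w = \phi_x$. This contradiction shows that no overlap-free extension $w'aw''$ exists.

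The main obstacle is making the synchronization bookkeeping rigorous at the two ends: occurrences of $00$ or $11$ sitting at the very start or end of $w'$, $w''$, or of the whole word are ``boundary specials'' that need not straddle a core boundary, so I must be careful to pin the phase using a core block (guaranteed by length at least $5$) rather than an arbitrary occurrence of $00$ or $11$, and to confirm that the factorizations of $w$ and $x$ restrict correctly to the common prefix $w'$ and suffix $w''$. Verifying that the factorization is genuinely determined for words as short as length $5$, rather than only asymptotically, is the delicate point, and is where the precise form of the structure theorem, with the short prefix/suffix set $\{\varepsilon,0,1,00,11\}$, must be invoked with care.
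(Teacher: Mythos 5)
The paper does not actually prove this lemma: it is quoted from Mol, Rampersad, and Shallit (their Lemma~6), so there is no internal proof to compare against. Your overall strategy --- assume $w'aw''$ is overlap-free, apply the Restivo--Salemi factorization $z = p\,\mu(y)\,s$ with $p,s \in \{\varepsilon,0,1,00,11\}$ to both host words, and derive a parity contradiction from the fact that $w''$ begins at positions of opposite parity in $w'w''$ and in $w'aw''$ --- is the right kind of argument. The clean synchronization fact you need is: every occurrence of $00$ or $11$ in an overlap-free word, except possibly one occupying the first two positions and one occupying the last two positions, lies inside the core $\mu(y)$ and hence begins at a position congruent to $|p|$ modulo $2$. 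If $w'$ contains such a ``safe'' double and $w''$ does too, the contradiction is immediate and no factorization of the pieces $w'$, $w''$ themselves is ever needed.

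The step you flag as delicate is, however, a genuine gap, and the patch you propose does not close it. You argue that $|w'|\geq 5$ together with $|p|+|s|\leq 4$ gives $w'$ a nonempty core, and that you can ``pin the phase using a core block.'' But a nonempty core can have length $2$, i.e., be a single block $01$ or $10$, and a single block carries no parity information; only a double lying in the core pins the phase. Length $5$ guarantees a double somewhere in $w'$, but not a core double: $w'=00101$ is overlap-free of length $5$ and its unique double sits at positions $(1,2)$, which is exactly where synchronization can fail (when the host has $|p|=2$, that double starts at an odd position while all core doubles start at even positions). The symmetric problem occurs when the only double of $w''$ sits at its last two positions. These cases need a separate argument --- for $w'=00101$, for instance, overlap-freeness of $01010$ forces $w''_1=a=1$, creating a double at positions $(5,6)$ of both host words, which is a safe core double and restores the parity contradiction. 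Relatedly, your claim that the unique factorizations of $w$ and $x$ ``must agree on $w'$'' is not free, since $w'$ may have length $5$, below the uniqueness threshold of the factorization theorem; you should reason directly with positions of doubles in the two host words rather than with factorizations of the pieces. With the boundary cases handled the proof goes through, but as written the argument has a hole precisely at the point you identified as the difficulty.
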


We now prove Theorem~\ref{thm-ex-irr-del}, which says that there are infinitely many simultaneously extremal, irreducible, and delicate overlap-free binary words.

\begin{proof}[Proof of Theorem~\ref{thm-ex-irr-del}]
    We define the following infinite family of words:
    \begin{align*}
        w_0 &= 01100110100110010110011010011001 \\
        w_{n+1} &= \mu(w_n).
    \end{align*}
    Since $w_0$ is overlap-free and $w$ is overlap-free if and only if $\mu(w)$ is for any word $w$ \cite{Thue1912}, we conclude that $w_i$ is overlap-free for all $i$. We complete the proof with the following claims.
    
    \begin{claim}
        $w_i$ is extremal overlap-free for all $i$.\footnote{The proof of this claim is similar to the proof of Lemma~12 in \cite{MolRampersadShallit}.}
    \end{claim}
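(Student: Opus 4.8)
The plan is to show that for every $i$, inserting any single letter $a \in \{0,1\}$ into $w_i$ at any position (including the ends) creates an overlap. The key structural fact I would exploit is the recursive definition $w_{i+1} = \mu(w_i)$ together with Thue's criterion that $w$ is overlap-free if and only if $\mu(w)$ is. First I would verify the base case $i = 0$ by direct computation: for each of the $2 \cdot (|w_0| + 1)$ possible insertions into $w_0$, exhibit an overlap in the resulting word. This is a finite check since $|w_0| = 32$.

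For the inductive step, suppose $w_i$ is extremal overlap-free and consider an arbitrary insertion $w_{i+1}' a w_{i+1}''$, where $w_{i+1} = w_{i+1}' w_{i+1}''$. I would split into cases according to the parity of $|w_{i+1}'|$, since $w_{i+1} = \mu(w_i)$ decomposes into length-2 blocks $\mu(0) = 01$ and $\mu(1) = 10$. When the insertion point falls on a block boundary (even $|w_{i+1}'|$), the inserted letter $a$ together with the surrounding blocks should correspond, via the structure of $\mu$, to an insertion into $w_i$, and I would pull back the overlap guaranteed by the inductive hypothesis through the fact that $\mu$ reflects overlap-freeness. When the insertion splits a block (odd $|w_{i+1}'|$), I would instead argue directly that a short overlap is forced locally: inserting a letter in the middle of $\mu(0) = 01$ or $\mu(1) = 10$, or equivalently duplicating or altering a letter within a $2$-block, creates a short repetition in the Thue--Morse structure that I can pin down explicitly.

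The main obstacle will be handling the insertions cleanly in the odd-position case, where the inserted letter does not align with the morphic block structure and so cannot simply be pulled back through $\mu$. Here the \emph{ad hoc} local argument must cover all the ways a single letter can sit inside or adjacent to a $\mu$-block, and I expect to lean on the preceding Lemma of Mol, Rampersad, and Shallit: writing the insertion point as splitting $w_{i+1}$ into a prefix and suffix, once both parts have length at least $5$, every single-letter extension contains an overlap regardless of alignment. Thus for insertions far from the two ends, that lemma dispatches the problem uniformly, and I would only need to treat the finitely many insertion positions within distance $5$ of either end as explicit boundary cases, checking them against the known prefix and suffix of $w_{i+1}$. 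This reduces the induction to a small finite verification layered on top of the clean even-position pullback.

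Combining the two cases, every insertion into $w_{i+1}$ creates an overlap, so $w_{i+1}$ is extremal overlap-free, completing the induction and hence the claim for all $i$.
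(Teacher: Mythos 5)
Your overall architecture matches the paper's: verify $w_0$ by hand, invoke the Mol--Rampersad--Shallit lemma to dispatch every insertion $w'aw''$ with $|w'|,|w''|\geq 5$, and reduce to a finite check of the insertions near the two ends. (Your initial even/odd block-boundary case split is both unnecessary once you invoke that lemma and, as stated, broken: $\mu(u)\,a\,\mu(v)$ has odd length, so it is never of the form $\mu(\cdot)$ and cannot be pulled back through Thue's criterion. But since you abandon that route for the lemma, this is moot.)

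The genuine gap is your assertion that the remaining cases can be settled by ``checking them against the known prefix and suffix of $w_{i+1}$,'' i.e., by a bounded local check. This works for $1\leq|w'|\leq 4$ (the first $8$ letters of $w_i$ are $01101001$ for $i\geq 1$, and inserting a letter there creates a short overlap), but it fails for $|w'|=0$, prepending a letter, and its mirror image at the other end. For any fixed length $L$, the length-$L$ prefix of $w_i$ eventually equals the length-$L$ prefix of $\mathbf{t}$, and prepending a letter to a prefix of $\mathbf{t}$ need not create a short overlap --- indeed the paper elsewhere uses $1\mathbf{t}_k$ as an overlap-free word, so $1$ prepended to the common prefix contains no overlap of bounded length. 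The overlap forced by prepending a letter to $w_i$ is necessarily long: the paper produces it from a \emph{global} structural fact, namely that (depending on the parity of $i$) either $w_i$ itself or its first quarter has the form $X0X0$ (resp.\ $X1X1$), so that prepending the appropriate letter yields the overlap $0X0X0$ (resp.\ $1X1X1$) of length proportional to $|w_i|$. The case $|w'|=1$, $a=0$ also reduces to this prepending case rather than to a local check. Your proposal is missing this idea, and without it the induction does not close; you would also want to note, as the paper does, the reversal/complement symmetry of $w_i$ so that only the left end needs explicit treatment.
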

    
    \begin{proof}
        The extremality of $w_0$ can be verified, so suppose $i \geq 1$. By the lemma, we only need to verify that every extension $w^\prime a w^{\prime\prime}$ for $a \in \{0, 1\}$ with $|w^\prime| \leq 4$ or $|w^{\prime\prime}| \leq 4$ contains an overlap. In fact, we only need to consider the case where $|w^\prime| \leq 4$, because when $i$ is even, $w_i$ is the complement of its reversal, and when $i$ is odd, $w_i$ is its own reversal. We consider two cases.
        
        \textbf{Case I:} $|w^\prime| = 0$. Suppose $a = 0$. If $i$ is even, the first quarter of $w_i$ is of the form $X0X0$, so we have an overlap. If $i$ is odd, $w_i$ is of the form $X0X0$, so we have an overlap.  Suppose $a = 1$. If $i$ is even, $w_i$ is of the form $X1X1$, so we have an overlap. If $i$ is odd, the first quarter of $w_i$ is of the form $X1X1$, so we have an overlap.
        
        \textbf{Case II:} $1 \leq |w^\prime| \leq 4$. The first 8 letters of $w_i$ are 01101001. If $|w^\prime| = 1$ and $a = 0$, this is equivalent to $|w^\prime| = 0$ and $a = 0$, so the result follows from Case I. Otherwise, inserting 0 or 1 into 01101001 creates an overlap.
    \end{proof}
    
    \begin{claim}
        $w_i$ is irreducible overlap-free for all $i$.
    \end{claim}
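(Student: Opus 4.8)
The plan is to mirror the structure of the preceding extremality claim and exploit the same two pieces of arithmetic structure in the family: the self-similarity under $\mu$ and the symmetry of each $w_i$ under reversal (and complementation). First I would dispose of the base case $i = 0$ by direct verification that $w_0$ is irreducible overlap-free. For the inductive step, I would show that if $w_i$ is irreducible, then $w_{i+1} = \mu(w_i)$ is as well. The key observation is that $w_{i+1}$ factors as a concatenation of blocks $\mu(0) = 01$ and $\mu(1) = 10$, so its interior letters come in two flavors: those that split a block (removing a letter from inside a single $\mu$-image) and those at block boundaries.

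The main technical work is analyzing what happens when we remove each interior letter of $w_{i+1}$. First I would handle the letters interior to a single $2$-block $\mu(a)$: removing one of the two letters of some $\mu(a)$ leaves a word of the form $\mu(u)\,b\,\mu(v)$, where $b$ is a single letter and $uv$ recovers $w_i$ with one block deleted. Here I expect to argue that the local pattern around the deletion already contains an overlap, by examining the handful of possible three-letter or five-letter neighborhoods in the Thue--Morse-like structure (using that the blocks $T_0, T_1$ and their admissible overlaps are sharply constrained). For the remaining interior letters, the deletion corresponds more directly to a deletion in $w_i$: removing a boundary letter of $\mu(w_i)$ is, up to a $\mu$-image, the result of removing an interior letter of $w_i$, so the inductive hypothesis that $w_i$ is irreducible supplies an overlap in $\mu(w_i)$ after applying that $\mu$ preserves (and reflects) overlap-freeness. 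The reversal/complement symmetry of $w_i$ should let me cut the casework roughly in half, exactly as in the extremality argument, since an overlap created by deleting the $j$-th letter corresponds under the symmetry to one created by deleting a symmetric position.

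The step I expect to be the main obstacle is the boundary-letter analysis, where the correspondence between a deletion in $\mu(w_i)$ and a deletion in $w_i$ is not perfectly clean: deleting a single letter at a block boundary does not literally equal $\mu$ applied to a letter-deletion in $w_i$, because $\mu$-images have even length and a single deletion produces an odd-length gap. I would resolve this by tracking how the parity shift propagates: removing one letter from $\mu(w_i)$ desynchronizes the block structure from that point onward, and I must show the resulting word still contains an overlap by relating its tail to a shifted Thue--Morse factor. The cleanest route is probably to reduce every case to a finite check on short factors (of length at most, say, $12$) surrounding the deletion point, since overlaps in overlap-free-derived words are necessarily short and localized; the admissible local patterns are few because $w_i$ is built from only the blocks $T_0$ and $T_1$ with their limited adjacencies. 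I would therefore present the inductive step as a finite verification over these bounded neighborhoods, invoking the symmetry to halve the cases and the irreducibility of $w_i$ together with the overlap-preservation of $\mu$ to close the boundary cases.
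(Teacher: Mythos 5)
Your primary line of attack --- induction on $i$, transferring letter-deletions in $w_{i+1}=\mu(w_i)$ back to letter-deletions in $w_i$ --- has a gap that you yourself flag but do not close: deleting one interior letter of $w_i$ and applying $\mu$ deletes \emph{two} letters of $w_{i+1}$, so the induction hypothesis that $w_i$ is irreducible never yields a statement about $w_{i+1}$ with a single letter removed. Your proposed repair leans on the assertion that any overlap created by a deletion is ``short and localized,'' which is neither proved nor what irreducibility actually requires: you must \emph{exhibit} an overlap after each deletion, not bound the length of every overlap that might appear. As written, the boundary-letter case of your induction does not go through.

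The paper sidesteps both the induction and the deletion correspondence. It checks $w_0$ directly, and for $i\ge 1$ it uses that $w_0=\mu^2(00110011)$, hence $w_i=\mu^{i+2}(00110011)$ is a concatenation of the blocks $T_0=\mu^3(0)=01101001$ and $T_1=\mu^3(1)=10010110$. Every interior letter of $w_i$ is then an interior letter of some length-$16$ factor $T_aT_b$, so it suffices to verify that the four words $T_0T_0$, $T_0T_1$, $T_1T_0$, $T_1T_1$ are irreducible: the deletion already creates an overlap inside that window, and that overlap is a factor of the full deleted word. Your closing remark about ``a finite verification over bounded neighborhoods built from $T_0$ and $T_1$'' gestures at exactly this, but to make it a proof you need to name the finite list of windows, observe that every interior position of $w_i$ is interior to one of them, and discard the broken $\mu$-transfer step --- the irreducibility of $w_i$ is never used to establish that of $w_{i+1}$.
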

    
    \begin{proof}
        We again verify the result for $w_0$ and then suppose $i \geq 1$. Since $w_i$ is a concatenation of copies of $T_0$ and $T_1$, it suffices to verify that $T_0 T_0$, $T_0 T_1$, $T_1 T_0$, and $T_1 T_1$ are irreducible overlap-free.
    \end{proof}
    
    \begin{claim}
        $w_i$ is delicate overlap-free for all $i$.
    \end{claim}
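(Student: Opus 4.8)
The plan is to mirror the structure of the proof that $w_i$ is irreducible overlap-free, but the argument is actually cleaner because flipping a letter, unlike deleting one, does not shift the positions of the remaining letters and therefore preserves the decomposition into blocks. First I would dispose of the base case $i = 0$ by direct verification: $w_0$ has length $32$, so one checks that each of its $32$ single-letter flips introduces an overlap. For $i \geq 1$ I would rely on the block structure already used above, namely that $w_i$ is a concatenation of copies of $T_0 = 01101001$ and $T_1 = 10010110$. This holds because $w_0 = \mu^2(00110011)$, so $w_i = \mu^{i}(w_0) = \mu^3\!\left(\mu^{\,i-1}(00110011)\right)$ for $i \geq 1$, and each letter of $\mu^{\,i-1}(00110011)$ contributes a block $\mu^3(0) = T_0$ or $\mu^3(1) = T_1$.

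The key step, which I would record as the heart of the argument, is the observation that both $T_0$ and $T_1$ are themselves delicate overlap-free, so flipping any single letter of a standalone copy of $T_0$ or $T_1$ already produces an overlap lying entirely within those eight letters. Now fix $i \geq 1$ and any position $p$ of $w_i$. This position lies in a unique block $B$, which equals $T_0$ or $T_1$, and flipping $p$ in $w_i$ is exactly flipping the corresponding local position of $B$ while leaving every other block intact. By delicacy of $B$, this flipped block contains an overlap, and since the flipped block occurs as a contiguous factor of the flipped $w_i$, that overlap is a factor of the flipped $w_i$ as well. Hence flipping any letter of $w_i$ creates an overlap, which together with the overlap-freeness of $w_i$ established above shows $w_i$ is delicate overlap-free.

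I expect the only real obstacle to be conceptual rather than computational: one must explain why delicacy of the blocks suffices here, even though for irreducibility it was necessary to examine the four pairs $T_0 T_0$, $T_0 T_1$, $T_1 T_0$, and $T_1 T_1$. The distinction is that deleting an interior letter shifts all subsequent letters and can force the resulting overlap to straddle a block boundary, whereas flipping a letter keeps the block decomposition aligned, so the overlap guaranteed by the blockwise delicacy stays confined to a single block and requires no boundary analysis. The remaining work is the finite verification for $w_0$ and the routine check that $T_0$ and $T_1$ are delicate, both of which can be carried out by hand or by computer.
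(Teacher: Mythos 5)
Your proof is correct and takes essentially the same approach as the paper, which verifies $w_0$ directly and then, for $i \geq 1$, invokes the decomposition of $w_i$ into copies of $T_0$ and $T_1$ together with the fact that these blocks are themselves delicate. Your explicit justification of why blockwise delicacy suffices with no boundary analysis (in contrast to irreducibility, where the pairs $T_0T_0$, $T_0T_1$, $T_1T_0$, $T_1T_1$ must be checked) is precisely the step the paper leaves implicit in its one-line argument.
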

    
    \begin{proof}
        We again verify the result for $w_0$ and then suppose $i \geq 1$. Since $T_0$ and $T_1$ are delicate overlap-free, the claim follows.
    \end{proof}
    
    This concludes the proof of the theorem.
\end{proof}

\section{Generalizing Delicacy} \label{sec-k-del}

We conclude by introducing a natural generalization of delicacy and raising a question about it for further study. A \emph{$k$-delicate} squarefree word is a nonempty squarefree word such that changing between 1 and $k$ of its letters to other letters from the alphabet creates a square. The definition again analogously extends to overlap-free and cubefree words.

\begin{question}
    Are there finite $k$-delicate squarefree (respectively, overlap-free, cubefree) ternary (respectively, binary) words for all $k$?
\end{question}

\section{Acknowledgments}
This research was conducted at the University of Minnesota Duluth research program run by Joe Gallian and supported by NSA Grant H98230-20-1-0009 and NSF-DMS Grant 1949884. Thanks to Amanda Burcroff, Joe Gallian, Swapnil Garg, Noah Kravitz, and Jeffrey Shallit for helpful comments.

\newpage
\bibliographystyle{plain}
\bibliography{bib}

\end{document}